\begin{document}

\def\COMMENT#1{}

\newcommand{\case}[1]{\medskip\noindent{\bf Case #1} }
\newcommand{\step}[1]{\medskip\noindent{\bf Step #1} }
\newcommand{\fl}[1]{\lfloor #1\rfloor}
\newcommand{\ceil}[1]{\lceil #1\rceil}
\newtheorem{problem}{Problem}
\newtheorem{theorem}{Theorem}
\newtheorem{lemma}[theorem]{Lemma}
\newtheorem{proposition}[theorem]{Proposition}
\newtheorem{corollary}[theorem]{Corollary}
\newtheorem{conjecture}[theorem]{Conjecture}
\newtheorem{claim}[theorem]{Claim}
\newtheorem{definition}[theorem]{Definition}
\newtheorem*{definition*}{Definition}
\newtheorem{fact}[theorem]{Fact}
\newtheorem{observation}[theorem]{Observation}
\newtheorem{question}[theorem]{Question}
\newtheorem{remark}[theorem]{Remark}
\newcommand{\ind}{\mathrm{ind}}
\newcommand{\mis}{\mathrm{mis}}
\newcommand{\MIS}{\mathrm{MIS}}
\newcommand{\fm}{f_{\max}}
\newcommand{\msf}{\mathrm{MSF}}

\numberwithin{equation}{section}
\numberwithin{theorem}{section}

\newcommand{\De}{\Delta}
\newcommand{\de}{\delta}
\newcommand{\Ga}{\Gamma}
\newcommand{\ep}{\varepsilon}
\newcommand{\cP}{\mathcal{P}}
\newcommand{\cI}{\mathcal{I}}
\newcommand{\cG}{\mathcal{G}}
\newcommand{\cT}{\mathcal{T}}
\newcommand{\cF}{\mathcal{F}}
\newcommand{\cL}{\mathcal{L}}
\newcommand\ex{\ensuremath{\mathrm{ex}}}
\newcommand{\eul}{e}
\newcommand{\pr}{\mathbb{P}}
\newcommand{\bE}{\mathbb{E}}
\newcommand{\bN}{\mathbb{N}}
\newcommand{\bZ}{\mathbb{Z}}


\newif\ifnotesw\noteswtrue
\newcommand{\notes}[1]{\ifnotesw $\blacktriangleright$\ {\sf #1}\ 
	$\blacktriangleleft$ \fi}

\title[Groups with few maximal sum-free sets]{Groups with few maximal sum-free sets}
\author{Hong Liu and Maryam Sharifzadeh}
\thanks{H.~L.\ was supported by the Leverhulme Trust Early Career Fellowship~ECF-2016-523. M.~Sh.\ was supported by the European Union’s Horizon 2020 research and innovation programme under the Marie Curie Individual Fellowship agreement No 752426.}

\begin{abstract}
	We show that, in contrast to the integers setting, almost all even order abelian groups $G$ have exponentially fewer maximal sum-free sets than $2^{\mu(G)/2}$, where $\mu(G)$ denote the size of a largest sum-free set in $G$. This confirms a conjecture of Balogh, Liu, Sharifzadeh and Treglown.
\end{abstract}

\date{\today}
\maketitle

\section{Introduction}
A triple $\{x,y,z\}$ of natural numbers is a \emph{Schur triple} if $x+y=z$\footnote{Note that $x$ and $y$ are not necessarily distinct.}. A set $S\subseteq \bN$ is \emph{sum-free} if it does not contain any Schur triple, in other words, $(S+S)\cap S=\emptyset$. Sum-free set is a fundamental notion in combinatorial number theory. Its study dates back to the classical Schur's theorem~\cite{Sch16} in Ramsey theory in 1916, which states that any finite colouring of $\bN$ contains infinitely many monochromatic Schur triples. If we look at sum-free sets in the first $n$ integers $[n]:=\{1,\ldots,n\}$, it is easy to see that $\mu(n)$, the size of a largest sum-free subset of $[n]$, is $\ceil{n/2}$. Both the set of odd integers and $\{\fl{n/2}+1,\ldots n\}$ are examples of extremal sets. Denote by $f(n)$ the number of sum-free subsets of $[n]$. Since all subsets of a sum-free set are also sum-free, we have that $f(n)\ge 2^{\mu(n)}$. Cameron and Erd\H os~\cite{CameronErdos-SF1} conjectured that this trivial lower bound in fact gives the correct order, that is, $f(n)=O(2^{\mu(n)})$. Their conjecture was only proven more than a decade later independently by Green~\cite{G-CE} and Sapozhenko~\cite{sap}, both of which proved the stronger statement that there are two constants $C_0$ and $C_1$ such that $f(n)=(C_i+o(1))2^{\mu(n)}$, for $n\equiv i\mod 2$. 

Let us consider now a subcollection of ``largest'' sum-free sets. A sum-free set  $S\subseteq [n]$ is \emph{maximal} if it is not contained in any larger sum-free subset of $[n]$, and denote by $\fm(n)$  the number of maximal sum-free subsets of $[n]$. Motivated by the fact that all the sum-free sets in the above trivial lower bound for $f(n)$ lie in two maximal ones, Cameron and Erd\H os~\cite{CameronErdos-SF2} raised the question of enumerating maximal sum-free subsets of $[n]$. In particular, they asked whether $\fm(n)$ is exponentially smaller than $f(n)$. In the same paper, they showed that $\fm(n)\ge 2^{\mu(n)/2}$. Recently, Balogh, Treglown, and the authors~\cite{BLST2} gave an exact answer to this question, showing that, there exist constants $C_i$, $i\in [4]$, such that $\fm(n)=(C_i+o(1))2^{\mu(n)/2}$ for $n\equiv i\mod 4$.

A natural direction is to consider these questions for abelian groups. For an abelian group $G$,  we can define $\mu(G)$, $f(G)$, and $\fm(G)$ analogous to the integers setting.  Interest in sum-free subsets of abelian groups goes back to the 1960s. Estimating $\mu(G)$ turns out to be a much more difficult task. It was not until 2005 that Green and Ruzsa~\cite{GR-g} determined $\mu(G)$ for all finite abelian groups $G$. They also proved that, analogous to the integers setting, $f(G)=2^{(1+o(1))\mu(G)}$. One can then ask the question similar to Cameron and Erd\H os's: Is $\fm(G)$ exponentially smaller than $f(G)$?  Wolfovitz~\cite{wolf} showed that this is indeed the case for all even order abelian groups. This was extended to all abelian groups by Balogh, Treglown, and the authors~\cite{BLST2}, in particular,
\begin{equation}\label{eq-g}
	\fm(G)\le 3^{(1/3+o(1))\mu(G)}.
\end{equation}
Considering $\fm (n)=\Theta(2^{\mu(n)/2})$, the following question was raised in~\cite{BLST2}, asking whether analogous bound holds for abelian groups.
\begin{question}\label{ques-mu-over-2}
	Given an abelian group $G$, is it true that $f_{\max}(G)\le 2^{(1/2+o(1))\mu(G)}$?
\end{question} 
This stronger bound holds (\cite{BLST2}) for the group $\bZ_2^k:=\bZ_2\oplus\cdots\oplus \bZ_2$: $f_{\max}(\bZ_2^k)=2^{(1/2+o(1))\mu(\bZ_2^k)}$. 

It was also suspected in~\cite{BLST2} that there is an infinite class of abelian groups for which the upper bound in Question~\ref{ques-mu-over-2} is far from tight.
\begin{conjecture}\label{conjnew2}
	There exists a sequence of finite abelian groups $\{G_i\}_{i\in \bN}$ of increasing order such that for all $i$, $f_{\max} (G_i)$ is exponentially smaller than $2^{\mu (G_i)/2}.$
\end{conjecture}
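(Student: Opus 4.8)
The plan is to produce the sequence explicitly, rather than via the general bound \eqref{eq-g}, which here gives only $3^{(1/3+o(1))p}\approx 2^{0.53p}$, above the target. Let $p$ range over primes, put $G=G(p):=\bZ_{2p}$ and $n:=|G|=2p$, and let $H\le G$ be the unique index-$2$ subgroup (the even residues, $H\cong\bZ_p$), with $B:=G\setminus H$ the nontrivial coset. Since $2\mid n$ and $2\equiv 2\pmod 3$, the Green--Ruzsa formula gives $\mu(G)=\tfrac n3\bigl(1+\tfrac12\bigr)=p=n/2$, and $B$ is a sum-free set of size $p$; moreover $B$ is \emph{maximal}, since for any $h\in H\setminus\{0\}$ and any $b\in B$ the triple $\{h,b,h+b\}$ lies in $B\cup\{h\}$ with $h+b\in B$. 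Consequently any maximal sum-free $S$ with $S\subseteq B$ equals $B$, so it suffices to bound the number of maximal sum-free $S$ with $A:=S\cap H\ne\emptyset$. For such $S$ write $S_B:=S\cap B$, $R:=B\setminus S_B$, and identify $H$ and $B$ with $\bZ_p$ via a fixed basepoint in $B$. If this number is at most $2^{(1/2-\delta)p}$ for a fixed $\delta>0$ and all large $p$, then $f_{\max}(G)\le 1+2^{(1/2-\delta)p}$, which (as $\mu(G)/2=p/2$) is exponentially below $2^{\mu(G)/2}$, and the groups $G(p_i)$, with $p_i$ the $i$-th prime, witness the conjecture.

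\textbf{Structural constraints on $(A,S_B)$.} Sum-freeness of $S$ yields: (i) $(A+A)\cap A=\emptyset$, so $A$ is sum-free in $\bZ_p$ and $|A|<p/3+1$; (ii) $(A+S_B)\cap S_B=\emptyset$ and $(S_B+S_B)\cap A=\emptyset$, so by Cauchy--Davenport $|A|+2|S_B|\le p+1$ and $A\subseteq\bZ_p\setminus(S_B+S_B)$; and (iii) for every $h\in A$ no two elements of $S_B$ differ by $h$, i.e.\ $S_B$ is an independent set in $\mathrm{Cay}(B,\pm A)$, a triangle-free graph which (as $\gcd(h,p)=1$) contains a full $p$-cycle. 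Maximality of $S$ forces, in addition, that every $g\in R$ is extension-blocked, which unwinds to $R=(S_B+A)\cup(S_B-A)\cup(A-S_B)\cup\{g\in B:2g\in A\}$ — note $S_B\pm A$ and $A-S_B$ already lie inside $R$ by (ii) — and that $H\setminus(A\cup\{0\})\subseteq(S_B+S_B)\cup(S_B-S_B)\cup(A\pm A)\cup\{g\in H:2g\in A\}$; in particular $(S_B+S_B)\cup(S_B-S_B)$ and an $O(|A|^2)$-sized set together cover $H$.

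\textbf{The count, and the main obstacle.} It remains to bound the sum of $(\#A)\cdot(\#S_B)$ over admissible size-pairs by $2^{(1/2-\delta)p}$. The trouble is that neither factor is, by itself, small enough: (i) alone permits $2^{(1/3+o(1))p}$ choices of $A$, and (iii) alone permits $S_B$ to be any independent set of a $p$-cycle, about $1.62^p=2^{0.69p}$ of them — each, unchecked, swamps the target. The plan is to play $A$ off against $S_B$ through a case split on $|S_B+S_B|$. If $|S_B+S_B|$ is large, then by (ii) $A$ lives inside a set of size $p-|S_B+S_B|$, which is small, so $\#A$ collapses. If $|S_B+S_B|$ is small, then $S_B$ has small doubling, and since $\bZ_p$ has no proper nontrivial subgroup, Freiman's $3k{-}4$ theorem forces $S_B$ to be a dense subset of a short arithmetic progression — a structured family of only $p^{O(1)}2^{o(p)}$ sets once the density is used — while the intermediate-doubling range is handled by an entropy estimate in the parameters $(|A|,|S_B|,|S_B+S_B|)$ subject to (i)--(iii). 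In effect, one shows that maximality stops $S_B$ from being an independent set of the ``matching-like'' configurations that maximise the count, pushing it instead into the genuinely $2$-regular-or-denser, triangle-free graph $\mathrm{Cay}(B,\pm A)$ (together with the extra incidences forced by the equality describing $R$), and that such graphs carry at most $2^{(1/2-\delta)p}$ maximal independent-like configurations. Carrying out this optimization cleanly — pinning down the Freiman-type dichotomy and verifying that every case beats $2^{p/2}$ by a fixed exponential margin — is the step I expect to be the real work; the reduction, the maximality of $B$, and the derivation of (i)--(iii) are elementary.
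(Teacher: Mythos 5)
Your witness family $\bZ_{2p}$ is a legitimate choice (it has $r=1$ in the canonical decomposition, so it is covered by Theorem~\ref{thm-evengroupsequiv}), the verification that $\mu(\bZ_{2p})=p$ and that $B$ is maximal is fine, and the constraints (i)--(iii) are correctly derived. But the proposal has a genuine gap: the entire counting step --- which you yourself flag as ``the real work'' --- is missing, and the route you sketch does not visibly close it. Writing $A=S\cap H$, the bound you need is on $\sum_{A}\mis(L_A[B])$ over nonempty sum-free $A\subseteq H$, and the obstruction is the regime where $|A|$ is a small but linear fraction of $p$. There the number of choices of $A$ alone is $\binom{p}{|A|}=2^{\Theta(p)}$, while the best degree-based bounds on $\mis(L_A[B])$ (the link graph is roughly $3|A|$-regular, so Lemma~\ref{lem-mis-regular-dense} gives about $3^{2p/9}\approx 2^{0.35p}$) leave a product far above $2^{p/2}$ once $|A|\ge 0.02p$, say. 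Your proposed dichotomy on $|S_B+S_B|$ does not break this circularity: for such $A$ a maximal independent set $S_B$ of $L_A[B]$ can have size $O(1/c)\cdot O(1)$, in which case $(S_B+S_B)\cup(S_B-S_B)$ is tiny and imposes no useful restriction on $A$; and when $S_B$ is large you must enumerate $S_B$ before restricting $A$, which is just as costly. An ``entropy estimate in the parameters $(|A|,|S_B|,|S_B+S_B|)$'' is a hope, not an argument.

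This is precisely the difficulty the paper's proof is built to avoid. The container lemma of Green and Ruzsa (Lemma~\ref{lem-container}) places every sum-free set inside one of $2^{o(n)}$ containers $F=B\cup C$ with $B$ sum-free and $|C|=o(n)$, so the generating set $S\subseteq C$ has only $2^{o(n)}$ possibilities and the whole problem reduces to bounding $\mis(L_S[B])$ for a single small $S$; that bound in turn requires the new stability version of the Moon--Moser theorem (Lemma~\ref{lem-mis-cayley}) together with the almost-triangle-free estimate (Lemma~\ref{lem-mis-almosttrifree}) and a case analysis on $|S|$, with the order-$3$ singleton case ($6^{n/12}$) as the bottleneck. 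Without this machinery, or a genuine substitute that controls the sum over all generating sets $A$ of linear size, the statement is not proved.
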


We confirm this conjecture, showing that, somewhat surprisingly for almost all even order groups $\fm(G)$ is substantially smaller compared to the integers setting. 

\begin{theorem}\label{thm-evengroups}
	There exist a constant $c>0$ and an integer $n_0$ such that for almost all even order groups $G$ with $|G|>n_0$,
	\begin{align}\label{eq-exp-small}
	f_{\max}(G)\le 2^{(1/2-c)\mu(G)}.
	\end{align}
\end{theorem}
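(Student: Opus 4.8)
The key structural fact is that every even order abelian group $G$ has a subgroup of index $2$, equivalently a surjective homomorphism $\phi: G \to \bZ_2$; the preimage $\phi^{-1}(1)$ is a coset $H$ with $|H| = |G|/2$ that is automatically sum-free (sum of two elements of the nonzero coset lands in the zero coset). Thus $\mu(G) \ge |G|/2$. The plan is to show that a maximal sum-free set $S$ is essentially determined by a small amount of data once we know it ``lives near'' such a coset structure, and that for almost all even order $G$ the arithmetic of $G$ forces enough extra Schur triples inside these cosets to bring the count of maximal sum-free sets well below $2^{|G|/2 \cdot 1/2} \le 2^{\mu(G)/2}$.

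First I would invoke a container/removal-type result (as used to prove~\eqref{eq-g} in~\cite{BLST2}): every maximal sum-free set $S$ is contained in one of relatively few (subexponentially many, or at worst $2^{o(|G|)}$) ``containers'' $C$, each of which is close to being sum-free, and in fact close in structure to a union of cosets coming from an index-$2$ or index-$3$ subgroup. The dominant case is the index-$2$ case, where the container $C$ is contained in $H \cup Z$ with $H$ the non-identity coset of an index-$2$ subgroup $K$ and $|Z| = o(|G|)$. Inside such a container, bounding the number of maximal sum-free $S$ reduces to two contributions: (i) the choice of $S \cap Z$, which costs at most $2^{|Z|} = 2^{o(|G|)}$, and (ii) the choice of $S \cap H$. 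The point is that $H$ itself is sum-free, so if $H$ carried no Schur triples with one vertex outside, a maximal $S$ could contain all of $H$ and we would only get polynomially many sets from this container — far fewer than $2^{\mu(G)/4}$. The danger is the Cameron–Erd\H os phenomenon: a maximal $S$ need not contain all of $H$; it may sacrifice part of $H$ to gain elements elsewhere. But elements ``elsewhere'' must lie in $Z$, and $|Z| = o(|G|)$, so the number of maximal sets with $S \cap H \subsetneq H$ in a controlled way is still $2^{o(|G|)}$ unless $S$ omits a linear-sized chunk of $H$ — and a maximal set omitting $\Omega(|G|)$ elements of $H$ is impossible unless $H$ itself contains many Schur triples relative to $S$'s other elements, which again pins down $S$.

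The main obstacle — and where ``almost all'' enters — is controlling the index-$2$ subgroups whose non-identity coset $H$ is \emph{not} a maximal sum-free set on its own, since precisely those cosets can be extended (by elements of the identity coset), creating genuine freedom. One needs: for almost all even order abelian groups $G$ (say in the counting-by-order or counting-by-isomorphism-type sense of Erd\H os–Szekeres/Erd\H os–R\'enyi enumeration of abelian groups), \emph{every} coset $H$ of \emph{every} index-$2$ subgroup $K$ already contains so many Schur triples with the identity coset — equivalently, $K$ has many solutions to $x + y = z$ across the relevant cosets — that any sum-free $S \subseteq H \cup K$ with $|S \cap K| \ge \epsilon |G|$ forces $|S \cap H| \le (1 - c')|G|/2$ for some fixed $c' > 0$; and the resulting enumeration over the $2^{o(|G|)}$ choices of container, the $2^{o(|G|)}$ choices on $Z$, and the $\binom{|H|}{\le (1-c')|H|} \cdot 2^{\epsilon|G|}$-type choices on $H \cup K$ multiplies out to at most $2^{(1/2 - c)\mu(G)}$. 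Quantitatively, since almost all abelian groups of order $n$ are (for the even part) close to $\bZ_2^k \times (\text{odd})$ or have a large elementary abelian $2$-part, the relevant index-$2$ cosets $H$ are far from being ``independent-like'' — they are shot through with additive relations — and this is exactly the $\bZ_2^k$-type behaviour for which $\fm(\bZ_2^k) = 2^{(1/2+o(1))\mu(\bZ_2^k)}$ was known; the new work is showing the extension freedom from the identity coset is genuinely limited, i.e. that the ``$+Z$'' gains cannot compensate, which is the crux of the argument and where I expect the delicate counting to reside.
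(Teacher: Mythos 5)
Your high-level reduction (containers, then a coset $H$ of an index-$2$ subgroup plus a small set $Z$, then $2^{o(n)}$ choices outside $H$) does match the paper's first steps. But the core of your argument has the ``almost all'' mechanism exactly backwards, and this is a genuine gap rather than a presentational one. You argue that almost all even order abelian groups have a large elementary abelian $2$-part, so that $H$ is ``shot through with additive relations,'' and that this $\bZ_2^k$-like behaviour is what depresses the count. In fact $\bZ_2^k$ is precisely the group for which $\fm(\bZ_2^k)=2^{(1/2+o(1))\mu(\bZ_2^k)}$, i.e.\ the bound of the theorem \emph{fails} there, and the paper's Proposition~\ref{prop-almostall} shows the opposite genericity statement: almost all even order groups have $2^r=o(n)$, where $2^r$ bounds the number of solutions to $2x=g$ (Fact~\ref{fact-order2}), so they have very \emph{few} order-$2$ elements. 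The actual mechanism is that when $2$-torsion is scarce, the two kinds of edges of the link graph $L_S[B]$ (differences $x-y\in S\cup(-S)$ and sums $x+y\in S$) rarely coincide, so the link graph has strictly more than $|B|$ edges up to an $o(n)$ error (Claim~\ref{cl-edges-fewevens}); density of the link graph, not density of additive relations inside $H$, is what kills maximal independent sets. Your heuristic would instead single out the rare groups where the theorem is hardest (or false).

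The second gap is that you have no tool for counting \emph{maximal} sum-free sets as opposed to all sum-free sets. Your proposed bound of the form $\binom{|H|}{\le (1-c')|H|}\cdot 2^{\ep |G|}$ enumerates candidate subsets of $H\cup K$ and is exponentially larger than $2^{\mu(G)/2}$; it cannot close the argument. The paper instead fixes the trace $S$ of the maximal set on the small part, passes to the link graph $L_S[B]$ via Lemma~\ref{lem-mis}, and bounds $\mis(L_S[B])$. The new ingredient that makes the whole proof work is Lemma~\ref{lem-mis-cayley}, a stability version of the Moon--Moser bound: a graph with $n+k$ edges and maximum degree $\De$ has at most roughly $3^{n/3-k/(13\De)}$ maximal independent sets. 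Combined with the edge count above and a short case analysis on $|S|$ (with separate treatment of $S=\{s\}$ where $s$ has order $3$, and of $S=\{s,-s\}$), this yields $\mis(L_S[B])\le 2^{(1/4-c)n}$. Without some substitute for this maximal-independent-set stability lemma, the ``delicate counting'' you defer to at the end of your proposal does not exist.
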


A more formal statement will be given in Section~\ref{sec-almostallevens}. We remark that the constant $c$  can be taken as for instance $10^{-4}$.
Our result suggests that $\bZ_2^k$ might be the only exception among all even order groups achieving the bound $2^{(1/2+o(1))\mu (G)}$. We will discuss more on this in the concluding remarks. Our proof can be extended to find more groups satisfying the bound in~\eqref{eq-exp-small}.

\begin{theorem}\label{thm-other-type1}
	Let $G$ be $\bZ_2\oplus\cdots\oplus \bZ_2\oplus \bZ_4$ or $\bZ_5\oplus H$ with $|H|$ odd. Then $f_{\max} (G)$ is exponentially smaller than $2^{\mu (G)/2}.$
\end{theorem}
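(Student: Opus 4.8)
The plan is to rerun the proof of Theorem~\ref{thm-evengroups}. That proof passes through the container/removal method to reduce $\fm(G)$, up to a factor $2^{o(\mu(G))}$, to the following quantity $N(G)$: the maximum, over all maximum sum-free sets $B\subseteq G$ and all admissible ``cores'' $C\subseteq G\setminus B$, of the number of maximal sum-free sets of the form $A\cup C$ with $A\subseteq B$. This reduction is group-independent, so it may be quoted directly; what remains is to prove $N(G)\le 2^{(1/2-c)\mu(G)}$ for the two families. Record first the relevant values of $\mu$ via Green--Ruzsa~\cite{GR-g}. Since $\bZ_2^k\oplus\bZ_4$ has even order, $\mu(\bZ_2^k\oplus\bZ_4)=|G|/2$; in particular this group has the \emph{same} order and the \emph{same} $\mu$ as the exceptional group $\bZ_2^{k+2}$, for which $N(G)=2^{(1/2+o(1))\mu(G)}$, which is exactly what makes this case delicate. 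For $\bZ_5\oplus H$ with $|H|$ odd, the smallest prime $\equiv 2\pmod 3$ dividing $|G|=5|H|$ is $5$, so $\mu(\bZ_5\oplus H)=\tfrac25|G|$ and $2^{\mu(G)/2}=2^{|G|/5}$; this group has odd order and so is not covered by Theorem~\ref{thm-evengroups}, but the reduction above is insensitive to parity. Next, identify the maximum sum-free sets: when $G$ is even these are exactly the nontrivial cosets $G\setminus K$ of the index-$2$ subgroups $K$, and for $\bZ_5\oplus H$ they are the pullbacks of the two symmetric pairs $\{g,-g\}\subseteq\bZ_5$ under surjections $G\to\bZ_5$ --- for instance the products $\{1,4\}\oplus H$ and $\{2,3\}\oplus H$.

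Now fix $B$ and $C$ and let $S=A\cup C$ be maximal sum-free. As in the proof of Theorem~\ref{thm-evengroups}, disjointness of $C+A$ from $S$ forces $A$ to be disjoint from its translate $c+A$ for each relevant $c\in C$; when $G$ is even and $c$ is an involution this makes $A$ a partial transversal of $|B|/2$ pairs on $B$, of which there are $2^{|B|/2}=2^{\mu(G)/2}$ --- only the trivial bound. The point, and the main obstacle, is to extract the extra factor $2^{-c\mu(G)}$ from the \emph{remaining} constraints --- $C$ disjoint from $A+A$, $C$ itself sum-free, and $S$ genuinely maximal --- \emph{uniformly over every admissible $C$}. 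For $\bZ_2^k\oplus\bZ_4$ this is the crux, and is where the $\bZ_4$ summand enters: writing $\bZ_4=\{0,1,2,3\}$, one has $x+x=(0,2)$ for every $x$ with odd $\bZ_4$-coordinate, so any such $x\in A$ pushes the fixed involution $(0,2)$ into $A+A$. A short case analysis of which $c\in C$ can anchor the pairing then shows that either the ``odd layer'' of $A$ is essentially empty, or the odd layer is confined to a bounded number of cosets of a proper subgroup and $S$ ranges over a family of size $2^{o(\mu(G))}$; in either case $A$ effectively fills only about half of the pairs, giving $N(\bZ_2^k\oplus\bZ_4)\le 2^{(1/2-c)\mu(G)}$ --- and in fact one expects roughly $2^{\mu(G)/4}$, matched by the obvious transversal construction. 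For $\bZ_5\oplus H$ the mechanism is more rigid: $H$ has no involution, so $H$ contributes no genuine pairings at all, and since each $\{g,-g\}\subseteq\bZ_5$ is sum-free ``in both directions'' (both $\{g,-g\}+\{g,-g\}$ and the translates entering the cross-layer constraints avoid $\{g,-g\}$), the constraints force $A$ to be $h$-difference-free for every $h$ occurring in $C$, which together with maximality --- which in turn forces $C$ to be large and so nearly determined --- pushes $N(\bZ_5\oplus H)$ below $2^{(1/2-c)\mu(G)}$; here one also uses $|H|$ odd to limit the surjections $G\to\bZ_5$ (just the coordinate projection when $5\nmid|H|$, and finitely many more when $5\mid|H|$).

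Summing $N(G)$ over the $2^{o(\mu(G))}$ relevant pairs $(B,C)$ and absorbing the reduction's error finally gives $\fm(G)\le 2^{o(\mu(G))}\cdot 2^{(1/2-c)\mu(G)}\le 2^{(1/2-c')\mu(G)}$ for a slightly smaller $c'>0$, which is the asserted exponential gap below $2^{\mu(G)/2}$. I expect the reduction step and the identification of the maximum sum-free sets to be routine; essentially all of the effort should lie in the constraint-bookkeeping, the genuinely new difficulty being to show --- for $\bZ_2^k\oplus\bZ_4$, which is so close to the exceptional $\bZ_2^{k+2}$ --- that the gain occurs for \emph{every} admissible core rather than only the obvious one.
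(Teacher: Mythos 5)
There is a genuine gap: your proposal reproduces the container/removal reduction correctly, but it is missing the entire counting mechanism that actually produces the exponential saving, and the substitutes you sketch would not work in the critical cases. The paper's proof converts \emph{both} kinds of constraint on $A\subseteq B$ into edges of a single link graph on $B$ (Lemma~\ref{lem-mis}): the ``difference'' constraints $A\cap(c+A)=\emptyset$ become type~1 edges, and the ``sum'' constraints ($x+y=c$ forbidden for $x,y\in A$, $c\in S$) become type~2 edges \emph{inside $B$}. You keep only the first kind as a constraint on $A$ and demote the second to a constraint on $C$ given $A$ (``$C$ disjoint from $A+A$''), which is exactly where the argument dies: for $B=\bZ_2^t\oplus\{1,3\}$ with $|S|=1$, or for $B=\{2,3\}\oplus H$ with $S=\{s\}$, $s\in\{1\}\oplus H$, the difference constraints alone give a perfect matching on $B$ and hence exactly the trivial count $2^{|B|/2}=2^{\mu(G)/2}$, while excluding the single element $(0,\dots,0,2)$ (or a bounded set) from $C$ costs nothing exponential. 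The actual saving in the paper comes from the identity $2x=(0,\dots,0,2)$ being used in the opposite direction: it shows $s-x=x+\overline{s}$, so each type~2 edge is a type~1 edge for the shifted generator $\overline{s}$, making the link graph $|S^*|$-regular with $|S^*|$ typically $2|S|$ (Lemma~\ref{lem-case1-regular}); one then needs the two quantitative inputs your proposal never invokes, namely the stability version of Moon--Moser (Lemma~\ref{lem-mis-cayley}, proved in this paper precisely for this purpose) when the degree is at least $4$, and the almost-triangle-free bound (Lemma~\ref{lem-mis-almosttrifree}) in the sparse cases.

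Two of your structural claims also point the wrong way. The assertion that ``maximality forces $C$ to be large and so nearly determined'' inverts the difficulty: large $S$ is the easy regime (handled as in Claim~\ref{cl-largeS} via Lemma~\ref{lem-mis-regular-dense}), and the hard cases are $|S|\in\{1,2\}$, where $C$ is far from determined and the number of extensions is largest. Likewise, for $\bZ_2^k\oplus\bZ_4$ the dichotomy ``either the odd layer of $A$ is essentially empty or it is confined to a bounded number of cosets'' is false in the main case $B=\bZ_2^t\oplus\{1,3\}$, where all of $B$ is the odd layer and typical maximal extensions occupy about half of $B$. So while your identification of $\mu(G)$, of the candidate sets $B$, and of the relevance of the relation $2x=(0,\dots,0,2)$ is on target, the ``constraint bookkeeping'' you defer to is not a routine verification: without the link-graph formulation and Lemmas~\ref{lem-mis-cayley} and~\ref{lem-mis-almosttrifree} (or equivalents), the argument as described stalls at the trivial bound $2^{\mu(G)/2}$ exactly where the theorem needs to beat it.
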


On the other hand, we show that the bound in~\eqref{eq-g} cannot be improved, giving a negative answer to Question~\ref{ques-mu-over-2}.
\begin{proposition}\label{prop-9}
	Let $G$ be an abelian group of order $n$, such that $9|n$. Then 
	\begin{align*}
		f_{\max}(G)=3^{(1/3+o(1))\mu(G)}.
	\end{align*}
\end{proposition}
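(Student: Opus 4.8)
The upper bound $f_{\max}(G)\le 3^{(1/3+o(1))\mu(G)}$ is immediate from \eqref{eq-g}, so the whole content is the matching lower bound: I need to exhibit $3^{(1/3-o(1))\mu(G)}$ distinct maximal sum-free subsets of $G$. The first step is to record the relevant structure from Green--Ruzsa. Since $9\mid n$ we are in one of two regimes: either $\mu(G)=n/3$ and a largest sum-free set can be taken to be a nontrivial coset of an index-$3$ subgroup, or $\mu(G)=n\frac{p+1}{3p}$ for the smallest prime $p\equiv 2\pmod3$ dividing $n$, in which case a largest sum-free set has the form $M^\ast=\phi^{-1}(I)$ for a surjection $\phi\colon G\to\bZ_p$ and a largest sum-free $I\subseteq\bZ_p$. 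In both regimes $9\mid n$ also forces the existence of an element $b$ of order $3$ lying inside any prescribed index-$3$ subgroup, and (in the second regime) $9\mid|{\ker\phi}|$.

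The heart of the argument is a single gadget: \emph{transversals of $\langle b\rangle$-orbits inside a largest sum-free set}. In the case $\mu(G)=n/3$ this is clean. Let $H\le G$ have index $3$, let $C_1$ be a nontrivial coset (sum-free of size $n/3=\mu(G)$ since $C_1+C_1=C_2\ne C_1$), and fix $b\in H$ of order $3$. Translation by $\langle b\rangle$ partitions $C_1$ into $n/9$ orbits of size $3$, and each choice of one point from each orbit gives a transversal $A\subseteq C_1$ with $|A|=n/9$. The set $S_A:=A\cup\{b\}$ is sum-free: $A$ sits inside the sum-free set $C_1$; $A+A\subseteq C_2$ avoids $C_0\ni b$; $2b\ne b$; and $(A+b)\cap A=\emptyset$ together with $(b+A)\cap A=\emptyset$ because $A$ meets each $\langle b\rangle$-orbit exactly once. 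Now extend $S_A$ to any maximal sum-free $\widehat S_A\supseteq S_A$. Since $b\in\widehat S_A$ has order $3$ and $\widehat S_A$ is sum-free, $\widehat S_A$ cannot contain two elements differing by $b$ (that would give the Schur triple $\{z,b,z+b\}$) nor two differing by $2b$ (since $b+(z+2b)=z$); hence $\widehat S_A$ meets each $\langle b\rangle$-orbit of $C_1$ in at most one point, and as it contains the transversal $A$ we get $\widehat S_A\cap C_1=A$. Thus distinct transversals produce distinct maximal sum-free sets, giving $f_{\max}(G)\ge 3^{n/9}=3^{\mu(G)/3}$, which with \eqref{eq-g} finishes this case.

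In the case $\mu(G)=n\frac{p+1}{3p}>n/3$, the bare bound $3^{n/9}$ no longer matches the target, and the plan is to run the same gadget simultaneously inside \emph{every} fibre of $M^\ast=\phi^{-1}(I)$: this template is a union of $|I|=\frac{p+1}{3}$ cosets of $K:=\ker\phi$, each of size $|K|=n/p$ divisible by $9$, so choosing a transversal of the $\langle b\rangle$-orbits ($b\in K$ of order $3$) inside each of the $|I|$ fibres produces $3^{|I|\cdot|K|/3}=3^{\mu(G)/3}$ candidate sets $A\subseteq M^\ast$; and if the seed $A\cup\{b\}$ is sum-free, then, exactly as above, the presence of $b$ in the maximal extension forces $\widehat S_A\cap M^\ast=A$, so the resulting maximal sum-free sets are pairwise distinct.

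The main obstacle is that, unlike in the first case, the seed $A\cup\{b\}$ is \emph{not} automatically sum-free: every order-$3$ element of $G$ lies in $K=\phi^{-1}(0)$, and one checks that $0\in I+I$ for every largest sum-free $I\subseteq\bZ_p$ with $p\equiv2\pmod3$, so $b\in M^\ast+M^\ast$, and a transversal $A$ may contain $a_1,a_2$ with $a_1+a_2=b$. The crux is therefore to choose the transversals and the perturbing datum in a coordinated way: one analyses, fibre by fibre, which products of orbit-choices can realise $a_1+a_2=b$ — these come from pairs of orbits lying in the opposite fibres $i$ and $-i$ — and either forbids or locally repairs those choices without dropping the count below $3^{(1/3-o(1))\mu(G)}$, while also certifying that the maximal extension dominates all of $G$ uniformly over the surviving seeds. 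Organising this bookkeeping, presumably through the stability/container description of near-extremal sum-free sets in abelian groups from \cite{BLST2}, is the technical heart of the proof; the transversal gadget of the first case is the whole idea.
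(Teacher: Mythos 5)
Your first case is, up to phrasing, exactly the paper's proof. The paper takes $B$ to be a nontrivial coset of an index-$3$ subgroup, picks an order-$3$ element $s$ of that subgroup (available because $9\mid n$ makes $3$ divide the subgroup's order), checks that the link graph $L_{\{s\}}[B]$ has no type-$2$ edges and is therefore a disjoint union of $|B|/3$ triangles on the $\langle s\rangle$-orbits, and counts its $3^{|B|/3}$ maximal independent sets. A maximal independent set of a disjoint union of triangles is precisely one of your transversals, and your observation that a maximal sum-free set containing $s$ meets each $\langle s\rangle$-orbit at most once is exactly what makes $A\mapsto\widehat S_A$ injective (this is the content of the paper's ``corresponds naturally''). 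Your verification that the seed $A\cup\{b\}$ is sum-free is also correct, so this case is complete and coincides with the paper's argument.

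Your second case, however, should be deleted rather than completed, and the gap you admit there cannot be filled. Although the proposition is stated for all $G$ with $9\mid n$, its intended scope is type~II groups only: the section is headed ``Type II groups'', and the concluding remarks speak of ``type II groups of order divisible by 9'' and of ``the remaining type II groups not covered by Proposition~\ref{prop-9}''. For a type~II group one always has $\mu(G)=n/3$, so your first case already covers everything the paper proves. For type~I groups with $9\mid n$ the asserted lower bound is in general false: $3^{\mu(G)/3}=2^{(\log 3/3)\mu(G)}>2^{0.52\mu(G)}$, whereas Theorem~\ref{thm-evengroups} gives $\fm(G)\le 2^{(1/2-c)\mu(G)}$ for almost all even-order $G$, and $9\mid n$ is perfectly compatible with $2\mid n$ (take, say, $\bZ_2\oplus\bZ_{3^k}$). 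So the fibre-by-fibre repair you sketch for the regime $\mu(G)=n(p+1)/(3p)$ is not a missing technical step of this proposition but an attempt to prove a false statement; the correct move is to restrict to type~II at the outset, where your transversal gadget is the whole proof.
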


The rest of the paper is organised as follows. In Section~\ref{sec-prelim}, we introduce all the tools and useful results. Then, we prove  Theorem~\ref{thm-evengroups} in Section~\ref{sec-almostallevens}. The proof of Theorem~\ref{thm-other-type1} will be given in Section~\ref{sec-type1}. The proof of Proposition~\ref{prop-9} will be given in Section~\ref{sec-type2}. Some concluding remarks and open problems will be given in Section~\ref{sec-rmk}.


\section{Preliminaries}\label{sec-prelim}
\subsection{Notation}
For a graph $\Ga$, we write $V(\Ga)$ and $E(\Ga)$ for the set of vertices and edges of $\Ga$, respectively. We allow at most one loop at each vertex. Denote $e(\Ga):=|E(\Ga)|$ and $v(\Ga):=|V(\Ga)|$.
Given $x \in V(\Ga)$, we write $N(x, \Ga)$ for the set of vertices adjacent to $x$ in $\Ga$. We also define $d(x,\Ga):=|N(x,\Ga)|$. Note that a loop at $x$ contributes two to the degree of $x$. We write $\de (\Ga)$ for the minimum degree and $\De (G)$ for the maximum degree of $\Ga$.
Denote by $\Ga[T]$ the induced subgraph of $\Ga$ on the vertex set $T$, and $\Ga \setminus T$ the induced subgraph of $\Ga$ on the vertex set $V(\Ga)\setminus T$. For $E_1\subseteq E(\Ga)$, define $\Ga\setminus E_1\subseteq \Ga$ to be the subgraph on the same vertex set with $E(\Ga\setminus E_1)=E(\Ga)\setminus E_1$.

Throughout the paper, unless otherwise stated, all groups are finite and abelian and all logarithms are on base 2. We omit floors and ceilings where the argument is unaffected. 

\subsection{Number theoretic tools}\label{sec-gp-aux}
\begin{definition}\label{defn-group-types}
	Let $G$ be an abelian group of order $n$. 
	\begin{itemize}
		\item If $n$ is divisible by a prime $p \equiv 2$ (mod 3), then we say that $G$ is \emph{type~I($p$)}, for smallest such $p$. 
		
		\item If $n$ is not divisible by any prime $p  \equiv 2$ (mod 3), but $3|n$, then we say that $G$ is \emph{type~II}.
		
		\item Otherwise, $G$ is \emph{type~III}.
	\end{itemize}
\end{definition}

The following theorem was proved for type I and II groups by Diananda and Yap~\cite{DianandaYap}. Later, it was proved for some special type III groups~(see~\cite{RS-SumFreeGroup,Yap-SumFreeGroup1,Yap-SumFreeGroup2}), and for all type~III groups in~\cite{GR-g}. 
\begin{theorem}\label{thm-MSF-Groups}
	For all finite abelian groups $G$, if $G$ is type I($p$) then $\mu(G)=\frac{1}{3}+\frac{1}{3p}$. Otherwise, if $G$ is type II then $\mu(G)=\frac{1}{3}$. Finally, if $G$ is type III then $\mu(G)=\frac{1}{3}-\frac{1}{3m}$, where $m$ is the exponent (largest order of any element) of $G$.
\end{theorem}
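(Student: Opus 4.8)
The plan is to treat the lower and upper bounds on $\mu(G)$ separately, writing $n=|G|$ and reading $\mu(G)$ as the maximum density $|A|/n$ over sum-free $A\subseteq G$.

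\textbf{Lower bounds (constructions).} For each type I would pull back an extremal sum-free set through a cyclic quotient. If a prime $p$ divides $n$ then $p$ divides the exponent $m$, so $\bZ_p$ is a quotient of $G$; more generally $\bZ_d$ is a quotient whenever $d\mid m$. Given a surjection $\phi\colon G\to\bZ_d$ and any sum-free $B\subseteq\bZ_d$, the preimage $\phi^{-1}(B)$ is sum-free of density exactly $|B|/d$. For type~I($p$) I would take $B$ to be a middle interval of $\bZ_p$ of size $\tfrac{p+1}{3}$ (available since $p\equiv2\ (\mathrm{mod}\ 3)$); for type~II take $B=\{1\}\subseteq\bZ_3$ via a surjection onto $\bZ_3$; for type~III take a middle interval of $\bZ_m$ of size $\tfrac{m-1}{3}$, using that $m\equiv1\ (\mathrm{mod}\ 3)$ because every prime divisor of $m$ is $\equiv1\ (\mathrm{mod}\ 3)$. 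A short wrap-around check confirms each $B$ is sum-free and yields the target densities $\tfrac13+\tfrac1{3p}$, $\tfrac13$, and $\tfrac13-\tfrac1{3m}$.

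\textbf{Upper bounds (Fourier plus reduction to cyclic groups).} This is the substantial direction. The first step is a large-spectrum lemma: working in $\widehat G$ with $\widehat{1_A}(\chi)=\sum_{a\in A}\chi(a)$, the number of Schur triples in $A$ equals $\tfrac1n\sum_\chi\widehat{1_A}(\chi)\,\overline{\widehat{1_A}(\chi)}^{\,2}$, which vanishes when $A$ is sum-free. Isolating the principal character and applying Parseval yields a nontrivial $\chi$ with $|\widehat{1_A}(\chi)|\ge|A|^2/n$. The second step is structural: letting $d$ be the order of $\chi$, $\omega=e^{2\pi i/d}$, and $a_j=|A\cap\chi^{-1}(\omega^j)|$, the largeness of $\big|\sum_j a_j\omega^j\big|$ forces the distribution $(a_j)$ to concentrate on the level sets of $\chi$, so that up to a controlled error the problem reduces to a weighted sum-free problem in the cyclic quotient $\bZ_d$. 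The third step solves the cyclic case: one shows intervals are extremal in $\bZ_d$, giving $\mu(\bZ_d)$ equal to $\tfrac13+\tfrac1{3p}$, $\tfrac13$, or $\tfrac13-\tfrac1{3d}$ according to the residues mod $3$ of the prime divisors of $d$. Finally, since $d\mid m\mid n$, optimizing $\mu(\bZ_d)$ over $d$ and invoking the number-theoretic definition of the type (which primes $\equiv2\ (\mathrm{mod}\ 3)$ occur, and whether $3\mid n$) selects the extremal modulus $d=p$, $d=3$, or $d=m$, matching the constructions above.

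\textbf{Main obstacle.} The genuine difficulty is the second and third steps for \emph{type~III}, the case settled only in~\cite{GR-g}. Here every prime divisor of $n$ is $\equiv1\ (\mathrm{mod}\ 3)$, the relevant modulus is the full exponent $m$, and the target gap $\tfrac1{3m}$ below $\tfrac13$ is tiny; a crude Fourier or stability argument loses error terms of larger order than $\tfrac1{3m}$, so the concentration-to-cyclic reduction must be carried out with exact rather than approximate control, and the extremal analysis in $\bZ_d$ must pin down intervals precisely, for which Kneser-type sumset estimates are the natural tool. By contrast, types~I and II already follow from the coarser Diananda--Yap argument, where the constant-sized gap makes the stability step routine.
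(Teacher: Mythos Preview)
The paper does not prove this theorem at all: it is quoted as a background result, attributed to Diananda and Yap~\cite{DianandaYap} for types~I and~II and to Green and Ruzsa~\cite{GR-g} for type~III, and no argument is given in the paper. So there is no ``paper's own proof'' to compare against.

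That said, your sketch is a fair high-level outline of how those references actually proceed. The lower-bound constructions via pullback from a cyclic quotient are exactly the standard ones. For the upper bound, your Fourier framework (extract a large nontrivial character from the vanishing of the Schur-triple count, project to the induced cyclic quotient, and analyse the cyclic problem) is precisely the Green--Ruzsa strategy for type~III; you also correctly flag that the real work lies in making the reduction to $\bZ_d$ \emph{exact} rather than approximate, since the target saving $1/(3m)$ is below any error term a naive stability argument would produce. For types~I and~II the Diananda--Yap proof is in fact more elementary than the Fourier route you describe---it goes via coset counting and a direct pigeonhole in $\bZ_p$ or $\bZ_3$---but either approach works there. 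As written, your ``second step'' (concentration of the level-set weights forcing a cyclic reduction with no loss) is asserted rather than argued, and that is where the substance of~\cite{GR-g} lives; a full proof would need Kneser's theorem and the accompanying structural analysis, which you allude to but do not carry out.
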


We will use the following result by Green and Ruzsa~\cite{GR-g} on the structure of large sum-free sets in type~I group.

\begin{lemma}\label{lem-type1-group-stability}
	Suppose that $G$ is type $I(p)$ and write $p = 3k + 2$. Let $A \subseteq G$ be sum-free of size $|A|>\left(\frac{1}{3}+\frac{1}{3(p+1)}\right)n$. Then there exists a homomorphism $\phi: G \rightarrow \bZ/p\bZ$  such that $A$ is contained in $\phi^{-1}({k+ 1,...,2k+ 1})$.
\end{lemma}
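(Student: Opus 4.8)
The plan is to reduce the statement, via Kneser's theorem, to the classification of maximum sum-free sets in the cyclic group $\bZ/p\bZ$. Since $A$ is sum-free we have $(A+A)\cap A=\emptyset$, so $|A+A|\le n-|A|$. Let $H=\{g\in G : g+(A+A)=A+A\}$ be the stabiliser of $A+A$. Kneser's theorem gives $|A+A|\ge 2|A|-|H|$, hence $|H|\ge 3|A|-n>\tfrac{n}{p+1}$ by the density hypothesis; in particular $H$ is a proper nontrivial subgroup with $[G:H]\le p$.

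Next I would pass to the quotient $\pi\colon G\to G/H$. Since $A+A$ is a union of $H$-cosets and is disjoint from $A$, every coset meeting $A$ is disjoint from $A+A$; this readily gives that $\bar A:=\pi(A)$ is sum-free in $G/H$ and that $A\subseteq\pi^{-1}(\bar A)$, so $|A|\le|H|\cdot|\bar A|$. Combining this with Theorem~\ref{thm-MSF-Groups} applied to the sum-free set $\bar A\subseteq G/H$ yields $|A|\le\mu(G/H)\,n$, so $\mu(G/H)>\tfrac13+\tfrac1{3(p+1)}$. Since $|G/H|$ divides $n$, the smallest prime congruent to $2$ mod $3$ dividing $|G/H|$ is at least $p$, and then Theorem~\ref{thm-MSF-Groups} forces $G/H$ to be type~$\mathrm{I}(p)$. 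As type~$\mathrm{I}(p)$ requires $p\mid[G:H]$ while $[G:H]\le p$, we conclude $[G:H]=p$, i.e. $G/H\cong\bZ/p\bZ$. Feeding $|H|=n/p$ back into the size bound gives $|\bar A|\ge|A|/|H|>k$, whereas $|\bar A|\le\mu(\bZ/p\bZ)\,p=k+1$, so $\bar A$ is a \emph{maximum} sum-free subset of $\bZ/p\bZ$, of size exactly $k+1$.

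The remaining task is to show that every maximum sum-free set $S\subseteq\bZ/p\bZ$ with $p=3k+2$ is a dilate of $\{k+1,\dots,2k+1\}$. For $p\ge5$ I would argue that $|S+S|\le p-|S|=2k+1=2|S|-1<p-1$, so Cauchy--Davenport forces $|S+S|=2|S|-1$, and Vosper's theorem then makes $S$ an arithmetic progression $\{a,a+d,\dots,a+kd\}$ with $d\in(\bZ/p\bZ)^{\times}$. Dilating by $d^{-1}$ turns $S$ into an interval of $k+1$ consecutive residues whose sumset has $2k+1$ elements and must be the complementary interval; a short computation shows this is possible only for the interval $\{k+1,\dots,2k+1\}$. (The case $p=2$ is trivial.) Composing $\pi$ with multiplication by the corresponding unit gives the homomorphism $\phi\colon G\to\bZ/p\bZ$ with $A\subseteq\phi^{-1}(\{k+1,\dots,2k+1\})$.

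I expect the main obstacle to be this last step: verifying the non-degeneracy hypotheses of Vosper's theorem uniformly for all $p\ge5$, and pinning down that the middle interval is the only admissible one. A secondary point requiring care is the bookkeeping in the chain $|A|\le|H|\,|\bar A|\le|H|\cdot\mu(G/H)|G/H|$ against the precise threshold $\left(\tfrac13+\tfrac1{3(p+1)}\right)n$, so that the type of $G/H$ and the index $[G:H]$ are determined exactly; the Kneser reduction itself is routine.
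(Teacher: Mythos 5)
Your proposal is correct, but note that the paper itself contains no proof of this lemma: it is imported verbatim from Green and Ruzsa~\cite{GR-g}, so there is nothing internal to compare against. What you have written is essentially the standard argument behind that cited result. The Kneser step is sound: $|A+A|\le n-|A|$ together with $|A+A|\ge 2|A|-|H|$ gives $|H|>n/(p+1)$, hence $[G:H]\le p$; the quotient $\pi(A)$ is sum-free because $A+A$ is a union of $H$-cosets disjoint from $A$; and the density bookkeeping $\mu(G/H)\ge |A|/n>\frac13+\frac1{3(p+1)}$, combined with Theorem~\ref{thm-MSF-Groups} and the fact that every prime $\equiv 2 \pmod 3$ dividing $[G:H]$ is at least $p$, does force $G/H\cong\bZ/p\bZ$ and $|\pi(A)|=k+1$. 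The final classification step also goes through: for $p\ge 5$ one has $|S|=k+1\ge 2$ and $|S+S|=2k+1\le p-2$, so Vosper applies in its non-exceptional regime, and after dilating to an interval $\{a,\dots,a+k\}$ the sumset $\{2a,\dots,2a+2k\}$ must equal the complementary interval $\{a+k+1,\dots,a+3k+1\}$, forcing $a=k+1$. (This classification of maximum sum-free subsets of $\bZ/p\bZ$ is originally due to Diananda and Yap, and one could simply cite it in place of the Vosper argument.) The only cosmetic slip is the claim that $H$ is automatically nontrivial, which can fail when $n=p$, but in that case $[G:H]\le p$ holds trivially and the rest of the argument is unaffected.
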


We will need the following simple fact about abelian groups.

\begin{fact}\label{fact-order2}
	Let $G:=\bZ_{2^{\alpha_1}}\oplus \cdots\oplus\bZ_{2^{\alpha_r}}\oplus \bZ_{p_1^{\beta_1}}\oplus \cdots\oplus \bZ_{p_t^{\beta_t}}$ and $g\in G$. Then there are at most $2^r$ solutions in $G$ to the equation $2x=g$.
\end{fact}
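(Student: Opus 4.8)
The plan is to reduce the count of solutions of $2x = g$ in $G$ to a component-wise count via the direct sum decomposition, and observe that only the $2$-primary components $\bZ_{2^{\alpha_i}}$ contribute ambiguity. Write $x = (x_1,\dots,x_r,y_1,\dots,y_t)$ and $g = (g_1,\dots,g_r,h_1,\dots,h_t)$ according to the decomposition $G = \bigoplus_{i=1}^{r}\bZ_{2^{\alpha_i}} \oplus \bigoplus_{j=1}^{t}\bZ_{p_j^{\beta_j}}$. Then $2x = g$ holds in $G$ if and only if $2x_i = g_i$ in $\bZ_{2^{\alpha_i}}$ for every $i \in [r]$ and $2y_j = h_j$ in $\bZ_{p_j^{\beta_j}}$ for every $j \in [t]$. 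Hence the number of solutions is the product of the numbers of solutions in each cyclic factor, so it suffices to bound the number of solutions in each factor.

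First I would handle the odd factors: in $\bZ_{p_j^{\beta_j}}$ the element $2$ is a unit (since $p_j$ is odd), so multiplication by $2$ is a bijection and the equation $2y_j = h_j$ has exactly one solution. This contributes a factor of $1$. Next, for each $2$-power factor $\bZ_{2^{\alpha}}$, the map $x \mapsto 2x$ is a group endomorphism with kernel of size $2$ (namely $\{0, 2^{\alpha-1}\}$, or just $\{0\}$ if $\alpha = 0$, though one may assume $\alpha \ge 1$), so the equation $2x_i = g_i$ has either $0$ or exactly $2$ solutions — at most $2$ in either case. Multiplying over the $r$ such factors and the $t$ odd factors gives at most $2^r \cdot 1^t = 2^r$ solutions in $G$, as claimed.

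There is essentially no obstacle here; the only thing to be mildly careful about is the degenerate cases (some $\alpha_i$ could be $1$, or the equation could be inconsistent in some factor giving $0$ solutions), but "at most $2$" per $2$-power factor covers all of these uniformly, and the bound $2^r$ is stated as an upper bound so no sharpness argument is needed. The whole argument is a one-line consequence of the Chinese Remainder / direct-sum structure together with the fact that doubling is injective away from the $2$-torsion.
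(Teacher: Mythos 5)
Your proof is correct. The paper states this as a ``simple fact'' and gives no proof at all, so there is nothing to compare against; your component-wise argument (doubling is a bijection on each odd factor, and has kernel of size $2$ on each factor $\bZ_{2^{\alpha_i}}$ with $\alpha_i\ge 1$, so each fibre has $0$ or $2$ elements) is exactly the intended justification and is complete.
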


We will also use the classical result of Hardy and Ramanujan on asymptotics of the partition function.
\begin{theorem}\label{thm-partition}
	The number of partitions of integer $n$ is asymptotically $$\frac{1}{4n\sqrt{3}}\exp\left(\pi\sqrt{\frac{2n}{3}}\right).$$
\end{theorem}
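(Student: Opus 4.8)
The plan is to prove this classical Hardy--Ramanujan estimate by the analytic (circle) method combined with a saddle-point analysis. Write $p(n)$ for the number of partitions of $n$ and recall the Euler generating function
\[
P(q):=\sum_{n\ge 0}p(n)q^n=\prod_{k\ge 1}\frac{1}{1-q^k},\qquad |q|<1 .
\]
By Cauchy's formula, for any $0<r<1$,
\[
p(n)=\frac{1}{2\pi i}\oint_{|q|=r}\frac{P(q)}{q^{n+1}}\,dq .
\]
I would substitute $q=e^{-t}$ with $\mathrm{Re}\,t>0$, so that the contour becomes a vertical segment $\mathrm{Re}\,t=c$ and $p(n)=\frac{1}{2\pi i}\int_{c-i\pi}^{c+i\pi}P(e^{-t})e^{nt}\,dt$; since $P$ blows up fastest as $q\to 1^-$, the dominant contribution will come from $t$ near $0$.

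The analytic heart of the argument is the behaviour of $P(e^{-t})$ as $t\to 0^+$. Taking logarithms and expanding gives $\log P(e^{-t})=\sum_{m\ge 1}\frac{1}{m\,(e^{mt}-1)}$, which I would evaluate asymptotically either by a Mellin-transform / Euler--Maclaurin computation or, more cleanly, via the modular transformation law of the Dedekind eta function. The latter yields the exact identity
\[
\prod_{k\ge 1}(1-e^{-kt})=\sqrt{\tfrac{2\pi}{t}}\;\exp\!\Big(-\tfrac{\pi^2}{6t}+\tfrac{t}{24}\Big)\prod_{k\ge 1}\bigl(1-e^{-4\pi^2 k/t}\bigr),
\]
so that, as the final product tends to $1$ superpolynomially fast,
\[
P(e^{-t})\sim\sqrt{\tfrac{t}{2\pi}}\;\exp\!\Big(\tfrac{\pi^2}{6t}\Big)\qquad(t\to 0^+).
\]

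With this in hand I would run the saddle-point method on the contour integral. The exponent $f(t)=\frac{\pi^2}{6t}+nt$ has a saddle at $t_0=\pi/\sqrt{6n}$ with $f(t_0)=\pi\sqrt{2n/3}$ and $f''(t_0)=(6n)^{3/2}/(3\pi)$; choosing $c=t_0$ and approximating the integrand near the saddle by a Gaussian (the tails being negligible since the effective width $\sim n^{-3/4}$ is far smaller than $\pi$) gives
\[
p(n)\sim\frac{g(t_0)}{\sqrt{2\pi f''(t_0)}}\,e^{f(t_0)},\qquad g(t)=\sqrt{\tfrac{t}{2\pi}} .
\]
A direct computation of this prefactor collapses to $\frac{\sqrt3}{12n}=\frac{1}{4n\sqrt3}$, producing exactly the claimed asymptotic $\frac{1}{4n\sqrt3}\exp\!\big(\pi\sqrt{2n/3}\big)$.

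The main obstacle is not the saddle computation but the error control: one must show that the part of the contour with $\mathrm{Im}\,t$ bounded away from $0$ (equivalently, $q$ near roots of unity other than $1$, the ``minor arcs'') contributes negligibly compared with $e^{\pi\sqrt{2n/3}}$. This requires a uniform upper bound on $|P(e^{-t})|$ when $\mathrm{Im}\,t$ is not small, obtained once more from the transformation behaviour of the product near each rational point of the circle, together with the fact that every such singularity carries a strictly smaller exponential rate than the point $q=1$. Pasting the major-arc Gaussian estimate to the negligible minor-arc bound is the technical core of the proof; I note that one can shortcut to the weaker statement $\log p(n)\sim\pi\sqrt{2n/3}$ via an Ingham Tauberian theorem, but recovering the precise prefactor $1/(4n\sqrt3)$ genuinely needs the saddle-point / circle-method estimate outlined here.
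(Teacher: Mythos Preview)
The paper does not prove this theorem at all; it simply quotes it as ``the classical result of Hardy and Ramanujan'' and uses only the very weak consequence that the number of partitions of $\alpha$ exceeds $2^{\sqrt{\alpha}}$ for large $\alpha$ (in the proof of Proposition~3.1). So there is no proof in the paper to compare against.

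Your sketch is the standard circle-method/saddle-point derivation and is correct in outline: the modular transformation of $\eta$ gives the precise behaviour of $P(e^{-t})$ near $t=0$, the saddle at $t_0=\pi/\sqrt{6n}$ produces the main term, and bounding the minor arcs via the transformation behaviour near other roots of unity shows their contribution is of strictly smaller exponential order. For the purposes of this paper, however, all of this is overkill: even the crude elementary bounds $p(n)\ge 2^{\lfloor\sqrt{n}\rfloor-1}$ (partitions into distinct parts from $\{1,2,\dots,\lfloor\sqrt{n}\rfloor\}$) would suffice for the single place the result is invoked.
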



\subsection{Maximal independent sets in graphs}\label{sec-graph-aux}
In this subsection we collect together  results on maximal independent sets in a graph. Let $\mis(\Ga)$ denote the number of maximal independent sets in a graph $\Ga$. 

Moon and Moser~\cite{MM} showed that for any graph $\Ga$, 
\begin{align}\label{eq-mis-MM}
	\mis(\Ga)\le 3^{|\Ga|/3},
\end{align} 
and this bound is optimal for disjoint union of triangles. When a graph is almost regular and relatively dense, the bound above can be improved as follows (Equation (3) in~\cite{BLST}).
\begin{lemma} \label{lem-mis-regular-dense}
	Let $k\ge 1$ and let $\Ga$ be a graph on $n$ vertices. 
	Suppose that $\De(\Ga)\le k\de(\Ga)$ and set $b:= \sqrt{\de(\Ga)}$.  Then
	\begin{align*}
	\mis(\Ga)\le \sum_{0 \le i\le n/b}{n\choose i}3^{{\left(\frac{k}{k+1}\right)\frac{n}{3}} + \frac{2n}{3b}}  .
	\end{align*}
\end{lemma}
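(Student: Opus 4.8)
The plan is to combine a greedy "fixing" argument with the Moon--Moser bound~\eqref{eq-mis-MM}. The starting observation is that because $\De(\Ga)\le k\de(\Ga)$, every vertex has degree at least $b^2=\de(\Ga)$, so in a greedy process we can always find a vertex whose removal destroys many vertices. Concretely, I would describe how to encode a maximal independent set $I$ of $\Ga$ by a bounded amount of information, and then bound the number of possible encodings. Run the following process: while the current graph has a vertex of degree at least $b$, pick such a vertex $v$ of maximum degree; record whether $v\in I$; if $v\in I$, delete $v$ together with $N(v,\Ga')$ from the current graph (these neighbours are not in $I$); if $v\notin I$, then at least one neighbour of $v$ lies in $I$ — record which one, say $w$ — and delete $w$ together with $N(w,\Ga')$. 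In both cases at least $b$ vertices are deleted in one step (the chosen vertex has degree $\ge b$), so the process terminates after at most $n/b$ steps, and the set $R$ of "recorded" vertices (the $v$'s chosen and, when $v\notin I$, the witnesses $w$) has size at most $2n/b$.

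Next I would bound the number of outcomes of this process. The sequence of choices is determined by: the subset of $R$ that was recorded, which is a set of size at most $n/b$ chosen among $v$-slots (contributing the $\sum_{0\le i\le n/b}\binom{n}{i}$ factor, after noting the witness $w$ at each "$v\notin I$" step is one of the $\le \De(\Ga)$ neighbours — this is absorbed, roughly, into the binomial/exponential slack, or handled by also recording a bounded neighbour-index), together with the residual graph $\Ga''$ left when the process halts. On $\Ga''$ every vertex has degree less than $b$, but more importantly $I\cap V(\Ga'')$ is a maximal independent set of $\Ga''$, and $|V(\Ga'')|\le n$. Since $\Ga''$ is sparse — maximum degree $<b=\sqrt{\de(\Ga)}$ — one squeezes an extra saving: a maximal independent set in a graph of maximum degree $<b$ can be specified by choosing a dominating-type subset of size $O(n/b)$ together with a Moon--Moser count on the rest, which yields the $3^{2n/(3b)}$ factor. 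Meanwhile the bulk term $3^{(k/(k+1))\,n/3}$ comes from applying~\eqref{eq-mis-MM} not to $\Ga''$ directly but to the fact that each deletion step removed at least $\de(\Ga)$ vertices while "using up" at most $k\de(\Ga)+1$ vertices of the original vertex set — so the Moon--Moser exponent, when accounted per surviving vertex, is scaled by $k/(k+1)$.

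The honest way to get that last factor is probably the reverse: bound $\mis(\Ga)\le 3^{\alpha(\Ga)\cdot(\text{something})}$ is too weak, so instead one iterates the inequality $\mis(\Ga)\le \De(\Ga)\cdot \mis(\Ga\setminus N[v])$-type recursions. For a vertex $v$ of degree $d\ge \de(\Ga)$, either $v\in I$ (and we pass to $\Ga\setminus N[v]$, losing $d+1\ge \de(\Ga)+1$ vertices) or $v\notin I$ (and some neighbour is in $I$, giving $\le d\le k\de(\Ga)$ branches, each passing to a graph with $\ge \de(\Ga)+1$ fewer vertices). Unrolling this recursion over the at most $n/\de(\Ga)$ levels until degrees drop below $b$, and then applying~\eqref{eq-mis-MM} on the leftover low-degree graph, gives a bound of the shape $(k\de(\Ga))^{n/\de(\Ga)}\cdot 3^{?}$; replacing $(k\de(\Ga))^{n/\de(\Ga)}$ by $2^{O(n\log\de(\Ga)/\de(\Ga))}=2^{O(n/\sqrt{\de(\Ga)})}$ and bookkeeping carefully reorganises the branching factor into the stated $\sum_{i\le n/b}\binom{n}{i}$ and $3^{2n/(3b)}$ terms, while the Moon--Moser application on the dense part contributes $3^{(k/(k+1))\,n/3}$ after observing that each triangle-like unit of the extremal configuration spans $\ge \de(\Ga)+1$ original vertices of which a $1/(k+1)$ fraction survives.

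The main obstacle is getting the constant in the exponent of $3$ to be exactly $k/(k+1)$ rather than something merely of the form $1-\Omega(1/k)$: this requires choosing the greedy vertex to be of \emph{maximum} degree and arguing that whenever we branch on a non-neighbour the graph we recurse into has lost a full closed neighbourhood of size between $\de(\Ga)+1$ and $k\de(\Ga)+1$, so that amortised over all of $V(\Ga)$ the fraction of vertices that ever appear as the "independent" vertex in a retained branch is at most $\frac{1}{k+1}$. Pinning down this amortisation cleanly — i.e. that the Moon--Moser $3^{1/3}$-per-vertex rate only applies to a $k/(k+1)$-fraction of the vertices — is the delicate point; everything else (the $\binom{n}{i}$ bookkeeping, the passage to $b=\sqrt{\de(\Ga)}$, the sparse-graph tail) is routine, and since the lemma is quoted from~\cite{BLST} as Equation~(3) there, I would in practice cite that source rather than reprove it.
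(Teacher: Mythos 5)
The paper does not actually prove this lemma --- it is imported verbatim as Equation~(3) of \cite{BLST} --- so the relevant comparison is with the proof there. That proof encodes a maximal independent set $I$ by a greedily chosen $D\subseteq I$ in which each selected vertex covers at least $b$ \emph{new} vertices (so $|D|\le n/b$, accounting for the factor $\sum_{i\le n/b}\binom{n}{i}$), and then observes that $I\setminus D$ is a maximal independent set of $\Gamma\setminus N[D]$ contained in the set $X$ of vertices with fewer than $b$ neighbours outside $N[D]$. The heart of the matter is the bound $|X|\le\frac{k}{k+1}n+\frac{2n}{b}$, obtained by double counting edges between $X$ and $N[D]$: each $x\in X$ sends at least $\delta(\Gamma)-b$ edges into $N[D]$, each vertex of $N[D]$ receives at most $\Delta(\Gamma)\le k\delta(\Gamma)$ of them, and $|X|+|N[D]|\le n$. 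Applying the Moon--Moser bound~\eqref{eq-mis-MM} to $\Gamma[X]$ then gives $3^{|X|/3}\le 3^{\frac{k}{k+1}\frac{n}{3}+\frac{2n}{3b}}$.

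Your proposal has the right flavour (greedy covering plus Moon--Moser on a residual piece), but it has genuine gaps exactly at the point you yourself flag as delicate. First, in your branching process the claim that ``at least $b$ vertices are deleted in one step'' fails in the $v\notin I$ branch: there you delete $N[w]$ for a witness $w\in I$, and $w$ may have tiny degree in the current graph, so termination within $n/b$ steps is not guaranteed; moreover recording the witness at each such step multiplies the count by up to $\Delta(\Gamma)$ per step, which is not covered by the single factor $\sum_{i\le n/b}\binom{n}{i}$ in the statement. Second, the assertion that a maximal independent set in a graph of maximum degree less than $b$ ``can be specified by a dominating-type subset of size $O(n/b)$ plus a Moon--Moser count on the rest'' is false --- a perfect matching has maximum degree $1$ and $2^{n/2}$ maximal independent sets --- and in any case the $3^{2n/(3b)}$ term does not come from a sparse leftover; it is the error term in the bound on $|X|$ above. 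Third, and most importantly, the amortisation you sketch to obtain the factor $k/(k+1)$ (``a $1/(k+1)$ fraction survives'') is never established; the correct mechanism is the edge-counting between the low-residual-degree vertices and $N[D]$, the same counting that shows that any independent set all of whose vertices have degree at least $\delta(\Gamma)$ has size at most $kn/(k+1)$ when $\Delta(\Gamma)\le k\delta(\Gamma)$. Citing \cite{BLST} is of course legitimate, but the sketch as written would not assemble into a proof.
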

When a graph is triangle-free, Hujter and Tuza~\cite{HT} obtained the following exponential improvement, which is optimal witnessed by a perfect matching. If $\Ga$ is triangle-free, then
\begin{align}\label{htnew}
	\mis(\Ga)\le 2^{|\Ga|/2}.
\end{align}
We will also make use of the following version for `almost triangle-free' graphs (Corollary~3.3 in~\cite{BLST2}). 

\begin{lemma}\label{lem-mis-almosttrifree}
	Let $n,D \in \bN$ and $k \in \mathbb{R}$. Suppose that $\Ga$ is a graph and $T$ is a set such that $\Ga':=\Ga\setminus T$ is triangle-free.
	Suppose that $\De (\Ga) \leq D$, $v(\Ga')=n$  and $e (\Ga') \geq n/2+k$. Then 
	$$\mis(\Ga) \leq 2^{n/2-k/(100D^2)+2|T|}.$$
\end{lemma}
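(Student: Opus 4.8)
The plan is to reduce Lemma~\ref{lem-mis-almosttrifree} to the following quantitative strengthening of the Hujter--Tuza bound \eqref{htnew}: if $H$ is triangle-free with $\De(H)\le D$, $v(H)=m$ and $e(H)\ge m/2+q$ for a real number $q$, then $\mis(H)\le 2^{m/2-q/(100D^2)}$ (for $q\le 0$ this is just \eqref{htnew}). Granting this estimate, the reduction is short. Let $I$ be a maximal independent set of $\Ga$, put $J:=I\cap T$, and let $W:=N(J,\Ga)\cap V(\Ga')$, so $|W|\le D|J|$ because $\De(\Ga)\le D$. Then $I\cap V(\Ga')$ is an independent set of the triangle-free induced subgraph $\Ga'_J:=\Ga'\setminus W$, and in fact a \emph{maximal} independent set of it: a vertex $v\in V(\Ga'_J)$ with no $\Ga'$-neighbour in $I$ has, since $v\notin N(J,\Ga)$, no neighbour in $I$ at all, so maximality of $I$ forces $v\in I$. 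Since the map $I\mapsto(J,\,I\cap V(\Ga'))$ is injective, $\mis(\Ga)\le\sum_{J\subseteq T}\mis(\Ga'_J)$. Each $\Ga'_J$ is triangle-free with $\De\le D$, on $n-|W|$ vertices, and
\[
e(\Ga'_J)-\tfrac12v(\Ga'_J)\ \ge\ \bigl(e(\Ga')-D|W|\bigr)-\tfrac12(n-|W|)\ \ge\ k-\bigl(D-\tfrac12\bigr)|W|\ \ge\ k-D^2|J|,
\]
so the triangle-free estimate (or \eqref{htnew}, when $k-D^2|J|<0$) yields $\mis(\Ga'_J)\le 2^{n/2-k/(100D^2)+|J|}$ in every case, and hence
\[
\mis(\Ga)\ \le\ 2^{n/2-k/(100D^2)}\sum_{J\subseteq T}2^{|J|}\ =\ 2^{n/2-k/(100D^2)}\cdot 3^{|T|}\ \le\ 2^{n/2-k/(100D^2)+2|T|}.
\]

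It remains to prove the triangle-free estimate, which I would establish by induction on $m$, carrying $q=e(H)-m/2$ as a bookkeeping quantity (with $\De(H)\le D$ fixed). Isolated vertices lie in every maximal independent set and can be deleted, which raises $q$ by $1/2$ and lowers $m/2$ by $1/2$, preserving the bound. A component equal to a single edge $K_2$ can be peeled off, multiplying $\mis$ by exactly $2$ while deleting two vertices and one edge, hence leaving $q$ unchanged; this is the extremal configuration and pins down the shape $2^{m/2}$ of the main term. If there is a vertex $v$ of degree $1$ whose neighbour $u$ has degree $\ge 2$, split the maximal independent sets of $H$ according to whether $v\in I$ (in bijection with those of $H\setminus N[v]$, a graph on $m-2$ vertices) or $v\notin I$ (then $u\in I$, and these inject into those of $H\setminus N[u]$, on at most $m-3$ vertices; deleting $N[u]$ removes at most $\De(H)^2\le D^2$ edges, so $q$ drops by at most $D^2$ in this branch); a short computation then shows the two parts are bounded by $2^{m/2-q/(100D^2)}\bigl(2^{-1+1/(100D)}+2^{-3/2+1/100}\bigr)\le 2^{m/2-q/(100D^2)}$, which is where the constant $100$ is used. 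Since $2e(H)=\sum_v d(v)$, whenever $q>0$ some vertex has degree $\ge 2$, so after these reductions we may assume $\de(H)\ge 2$.

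The case $\de(H)\ge 2$ is the main obstacle. Here $e(H)\ge m$, so $q\ge m/2$ and one genuinely needs a definite improvement over \eqref{htnew}, obtained by \emph{spending} the surplus edges rather than merely carrying $q$ along. Naive branching is not enough: for a vertex $v$ with $d(v)\ge 2$, bounding $\#\{I:v\notin I\}$ by $\sum_{u\in N(v)}\mis(H\setminus N[u])$ already loses a factor of order $(d(v)+1)2^{-3/2}>1$ when $d(v)=2$, so it does not even recover \eqref{htnew}. The improvement should instead be extracted from the fact that a triangle-free graph with $\de\ge 2$ has girth at least $4$: the guiding example is a disjoint union of cycles of length $\ge 4$, on which $\mis(H)=\prod_i\mis(C_{\ell_i})\le 5^{m/5}<2^{m/2}$. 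The substantive part — the one place where bounded degree, the edge surplus, and the girth condition must be combined to produce a saving of at least $q/(100D^2)$ — is to upgrade such a bound to all triangle-free graphs of minimum degree $\ge 2$ and bounded degree, and then to re-incorporate the higher-degree vertices removed in the earlier reduction steps. This is precisely the content of the proposition in~\cite{BLST2} of which Lemma~\ref{lem-mis-almosttrifree} is the stated corollary.
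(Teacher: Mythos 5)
Your reduction of the general statement to the purely triangle-free case is correct and complete: for a maximal independent set $I$ of $\Ga$ with $J:=I\cap T$, the set $I\cap V(\Ga')$ is indeed a maximal independent set of $\Ga'\setminus\bigl(N(J,\Ga)\cap V(\Ga')\bigr)$, the bookkeeping $e(\Ga'_J)-\tfrac12 v(\Ga'_J)\ge k-D^2|J|$ is right, and $\sum_{J\subseteq T}2^{|J|}=3^{|T|}\le 2^{2|T|}$ closes the argument. This is also how the statement is meant to be obtained: the paper gives no proof of Lemma~\ref{lem-mis-almosttrifree} at all, quoting it verbatim as Corollary~3.3 of~\cite{BLST2}, where it is derived from a proposition about genuinely triangle-free graphs by essentially the reduction you describe. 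Your preprocessing of isolated vertices, $K_2$ components and pendant vertices is likewise sound (the number of edges lost when deleting $N[u]$ is closer to $D^2+D$ than to $D^2$, but the resulting inequality $2^{-1+1/(100D)}+2^{-3/2+1/100+1/(100D)}<1$ still holds comfortably for $D\ge 2$).

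The genuine gap is that the heart of the lemma --- the estimate $\mis(H)\le 2^{m/2-q/(100D^2)}$ for triangle-free $H$ with $\De(H)\le D$ and edge surplus $q>0$, in the remaining case $\de(H)\ge 2$ --- is never proved. You correctly diagnose why naive branching fails there and what the extremal picture is (disjoint long cycles, giving $5^{m/5}$), but you then declare this case to be ``precisely the content of the proposition in~\cite{BLST2}'' and stop. That proposition is the entire substance of the result: everything you do establish is routine bookkeeping around it, and the constant $100D^2$ in the exponent is dictated by the missing argument rather than by any step you carry out. So as a self-contained proof the proposal is incomplete; as a reconstruction of how Corollary~3.3 follows from the triangle-free proposition in~\cite{BLST2}, it is accurate.
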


%
%
%
%


\section{Proof of Theorem~\ref{thm-evengroups}}\label{sec-almostallevens}
To state Theorem~\ref{thm-evengroups} formally, we use the following proposition.
\begin{proposition}\label{prop-almostall}
	For any $\ep>0$, there exists $n_0>0$ such that the following holds for all $n>n_0$. Let $\bZ_{2^{\alpha_1}}\oplus\ldots\oplus\bZ_{2^{\alpha_r}}\oplus\bZ_{p_1^{\beta_1}}\oplus\ldots\oplus\bZ_{p_t^{\beta_t}}$ be the canonical decomposition of an abelian group of order $n$. Then all but $\ep$-proportion of abelian groups of order $n$ satisfy $2^r\le \ep n$.
\end{proposition}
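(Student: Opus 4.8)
The plan is to understand how abelian groups of order $n$ are counted and to show that for a typical such $n$ (and a typical group of that order), the "$2$-torsion rank" $r$ is so small that $2^r$ is far below $\varepsilon n$. Recall that by the fundamental theorem of finite abelian groups, the number of abelian groups of order $n$ is multiplicative: if $n=\prod_i q_i^{a_i}$ then it equals $\prod_i P(a_i)$, where $P$ is the partition function. The group $\bZ_{2^{\alpha_1}}\oplus\cdots\oplus\bZ_{2^{\alpha_r}}$ with $\alpha_1\ge\cdots\ge\alpha_r\ge 1$ corresponds to the partition $\alpha_1+\cdots+\alpha_r$ of $a$, the exponent of $2$ in $n$; here $r$ is the number of parts of that partition. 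So the quantity $2^r$ is controlled by $a=v_2(n)$ and by which partition of $a$ we have chosen. The key observation is that $r\le a$ always, so $2^r\le 2^a\le 2^{\log_2 n}=n$ is already true with no loss; we need the stronger $2^r\le\varepsilon n$ for all but an $\varepsilon$-fraction of groups.

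First I would separate the two sources of largeness. There are two bad events: (i) $a=v_2(n)$ is large, say $a>\log\log n$; or (ii) $a\le\log\log n$ but the chosen partition of $a$ has many parts. For (ii), note that if $a\le\log\log n$ then $2^r\le 2^a\le 2^{\log\log n}=\log n=o(\varepsilon n)$ automatically, so case (ii) is actually not bad at all — any group whose $2$-part has small exponent is fine. Thus the only thing to rule out is that $v_2(n)$ itself is large. So the statement reduces to: for all but an $\varepsilon$-fraction of abelian groups of order $n$, we have $v_2(n)\le\log\log n$ (or some similar slowly-growing bound that still beats $\varepsilon n$ after exponentiation — we could even afford $v_2(n)\le\frac12\log n$, giving $2^r\le 2^{a/1}$... but we must be careful: $r\le a$ and if $a$ can be as large as $\log_2 n$ then $2^r$ can be as large as $n$, which is not $\le\varepsilon n$). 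Hence we genuinely need $v_2(n)=a$ to satisfy $2^a\le\varepsilon n$, i.e. $a\le\log_2(\varepsilon n)=\log_2 n-\log_2(1/\varepsilon)$, which holds for all but finitely many exceptional values of $a$ once $n$ is large — precisely, it fails only when $n$ is within a bounded factor of a power of $2$, i.e.\ $n=2^a m$ with $m<1/\varepsilon$.

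The remaining point, then, is a counting one: among abelian groups of order $n$, what fraction have $2$-part equal to a given group of exponent $a$? Since the count factors as $P(v_2(n))\cdot\prod_{p>2}P(v_p(n))$, the fraction of groups of order $n$ whose $2$-Sylow subgroup is the specific group corresponding to a partition $\lambda\vdash a$ with $r=r(\lambda)$ parts is exactly $1/P(a)$, independent of the odd part. So the proportion of order-$n$ groups with $2^r>\varepsilon n$ equals $\big(\#\{\lambda\vdash a: r(\lambda)>\log_2(\varepsilon n)\}\big)/P(a)$. Now use Theorem~\ref{thm-partition} (Hardy–Ramanujan): $P(a)\sim\frac{1}{4a\sqrt3}e^{\pi\sqrt{2a/3}}$ grows subexponentially in $a$, whereas the number of partitions of $a$ into more than $t$ parts is at most the number of partitions of $a$ with largest part... hmm — more directly, the number of partitions of $a$ with at least $t+1$ parts is at most $P(a-t)\cdot(\text{something})$; in any case it is bounded by $P(a)$, and if $t=\log_2(\varepsilon n)\ge a$ there are simply \emph{no} such partitions (a partition of $a$ has at most $a$ parts). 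So when $a\le\log_2(\varepsilon n)$ the bad fraction is literally zero, and the whole proposition comes down to: the set of $n$ for which $v_2(n)>\log_2(\varepsilon n)$, equivalently $n/2^{v_2(n)}<1/\varepsilon$, is handled by taking $n_0$ large — wait, this is about a fixed $n$, so more carefully: for a given $n$, either $v_2(n)\le\log_2(\varepsilon n)$, in which case $2^r\le 2^{v_2(n)}\le\varepsilon n$ for \emph{every} group of order $n$ and we are done with zero exceptions; or $v_2(n)>\log_2(\varepsilon n)$, meaning $n=2^a m$ with $m=n/2^a<1/\varepsilon$ and $a>\log_2 n-\log_2(1/\varepsilon)$, so $a\to\infty$ as $n\to\infty$, and then I bound the exceptional proportion by $\#\{\lambda\vdash a:2^{r(\lambda)}>\varepsilon n\}/P(a)\le \#\{\lambda\vdash a: r(\lambda)>\log_2(\varepsilon n)\}/P(a)$; since $\log_2(\varepsilon n)\ge a-\log_2(1/\varepsilon) \ge a - O(1)$, this numerator counts partitions of $a$ with at least $a-O(1)$ parts, of which there are only $O(1)$ many (such a partition is $1+1+\cdots+1$ with at most $O(1)$ of the $1$'s merged into larger parts), while $P(a)\to\infty$; hence the proportion is $O(1)/P(a)\to 0$, which is $<\varepsilon$ once $n$ (hence $a$) is large enough.

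The main obstacle — really the only subtlety — is the bookkeeping in that last case: one must argue cleanly that $n=2^a m$ with $m$ bounded forces $a$ large (clear), and then that the number of partitions of $a$ into at least $a-c$ parts is bounded by a constant depending only on $c$ (an elementary but slightly fiddly count: subtract $1$ from each part, leaving a partition of at most $c$, with at most $P(c)\cdot$(choice of which parts to increment)$=O_c(1)$ possibilities), after which division by $P(a)\to\infty$ finishes it. Everything else is routine: the multiplicativity of the abelian-group count, the correspondence between $2$-Sylow subgroups and partitions of $v_2(n)$, and choosing $n_0$ at the end to absorb the finitely many residual constants.
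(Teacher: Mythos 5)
Your proposal is correct and follows essentially the same route as the paper: reduce to the case $2^{v_2(n)}>\ep n$ (so $n=2^a m$ with $m<1/\ep$ and $a\to\infty$), observe that the bad groups correspond to $O_\ep(1)$ choices of decomposition (your ``partitions of $a$ with at least $a-O_\ep(1)$ parts'' count is the same as the paper's bound $\prod_i 2^{\alpha_i-1}<1/\ep$ on the non-$\bZ_2$ even components), and divide by the total number of groups, which is at least the partition count $P(a)\to\infty$. The only cosmetic differences are that you invoke multiplicativity of the abelian-group count to get an exact proportion where the paper just lower-bounds the denominator, and that you only need $P(a)\to\infty$ rather than the full Hardy--Ramanujan asymptotic.
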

\begin{proof}
	Fix $\ep>0$, and let  $n$ be sufficiently large. Let $\alpha,h\in\bN$ be such that $n=2^\alpha\cdot h$ and $2\nmid h$. So $\alpha=\sum_{i\in[r]}\alpha_i\ge r$. We may assume that $2^\alpha=n/h>\ep n$, as otherwise all order-$n$ groups have the desired property. 
	
	We first bound the number of groups with $2^r> \ep n$. As 
	$\prod_{i\in[t]}p_i^{\beta_i}=h<1/\ep$, there are only $O_{\ep}(1)$ ways to choose the odd components $\bZ_{p_i^{\beta_i}}$. Similarly, as	
	\begin{align*}
		\prod_{i=1}^{r}2^{\alpha_i-1}=\frac{2^{\alpha}}{2^r}<\frac{1}{\ep},
	\end{align*}
	the number of possibilities for $\alpha_i\ge 2$, i.e.~the non-$\bZ_2$ even components, is at most $O_{\ep}(1)$. 
	
	On the other hand, the number of non-isomorphic abelian groups of order $n$  is at least the number of partitions of $\alpha$, which, by Theorem~\ref{thm-partition}, is at least $2^{\sqrt{\alpha}}$ (as $\alpha>\log(\ep n)$ is sufficiently large).
\end{proof}
We can now restate Theorem~\ref{thm-evengroups} as follows.
\begin{theorem}\label{thm-evengroupsequiv}
	There exists a constant $c>10^{-4}$, such that the following holds for sufficiently large $n$. Given an abelian group $G=\bZ_{2^{\alpha_1}}\oplus \cdots\oplus\bZ_{2^{\alpha_r}}\oplus \bZ_{p_1^{\beta_1}}\oplus \cdots\oplus \bZ_{p_t^{\beta_t}}$
	with even order $n$, if $2^r=o(n)$, then,
	\begin{align*}
		f_{\max}(G)\le 2^{(1/2-c)\mu(G)}.
	\end{align*}
	
\end{theorem}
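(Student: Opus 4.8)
Since $n$ is even, $2$ is the least prime congruent to $2$ modulo $3$ dividing $n$, so $G$ is type~$I(2)$ and Theorem~\ref{thm-MSF-Groups} gives $\mu(G)=\bigl(\tfrac13+\tfrac16\bigr)n=n/2$; thus the goal is $f_{\max}(G)\le 2^{n/4-cn/2}$. The homomorphisms $G\to\bZ_2$ are in bijection with $G/2G$, so there are exactly $2^r=o(n)$ of them, and for each surjective $\phi$ the set $\phi^{-1}(1)$ is a maximum sum-free set; being sum-free, it contains no maximal sum-free set other than itself, so there are at most $2^r=o(n)$ ``coset'' maximal sum-free sets. Fixing a small absolute $\ep>0$, a maximal sum-free $S$ with $|S|\le\ep n$ is one of at most $\binom{n}{\le\ep n}$ sets, which is below $2^{n/4-cn/2}$ once $\ep$ is small. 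And if $|S|>\tfrac49 n=\bigl(\tfrac13+\tfrac1{3(p+1)}\bigr)n$ with $p=2$, then Lemma~\ref{lem-type1-group-stability} (in which $p=3k+2$ forces $k=0$) places $S$ inside some $\phi^{-1}(1)$, so $S$ is one of the coset sets. It remains to bound the number of maximal sum-free $S$ with $\ep n<|S|\le\tfrac49 n$.

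For these I would combine the hypergraph container method for sum-free sets with the usual reduction to counting maximal independent sets in an auxiliary \emph{graph}: $S$ lies in one of $2^{o(n)}$ containers $C$, each almost sum-free with $|C|\le\mu(G)+o(n)=n/2+o(n)$, and there is $B\subseteq S$ of bounded size such that $S$ is a maximal independent set of the graph $\Ga_B$ on $G$ whose edges are the pairs completing a Schur triple with a member of $B$ (with a loop at $x$ when $2x\in B$). Thus the number of such $S$ inside $C$ is at most $2^{o(n)}\cdot\max_{|B|=O(1)}\mis\bigl(\Ga_B[C]\bigr)$. If $|C|\le\bigl(\tfrac12-\eta\bigr)n$ for a fixed $\eta>0$, then already the Moon--Moser bound~\eqref{eq-mis-MM} gives $\mis(\Ga_B[C])\le 3^{|C|/3}\le 2^{(\log 3)(1/2-\eta)n/3}$, which is below $2^{n/4-cn/2}$ when $\eta,c$ are small, and there are only $2^{o(n)}$ such $C$. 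So the crux is the containers with $|C|>\bigl(\tfrac12-\eta\bigr)n$, which by stability lie close to some $\phi^{-1}(1)$.

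For such a near-extremal $C$ and a maximal sum-free $S\subseteq C$, write $S=S_1\cup S_0$ with $S_1=S\cap\phi^{-1}(1)$ and $S_0=S\cap\phi^{-1}(0)$; the case $S_0=\emptyset$ gives $S=\phi^{-1}(1)$, so assume $j:=|S_0|\ge1$. A $\phi$-parity check of Schur triples shows $S_1$ is automatically sum-free and is a maximal independent set of the graph $H$ on $\phi^{-1}(1)$ in which $x\sim y$ iff $x\pm y\in S_0$, with a loop at $x$ when $2x\in S_0$ --- this is essentially $\Ga_B[\phi^{-1}(1)]$ for a suitable $B\subseteq S_0$. Each $g\in S_0$ contributes to $H$ the near-perfect matching $x\leftrightarrow g-x$, whose fixed points solve $2x=g$ and hence number at most $2^r$ by Fact~\ref{fact-order2}, together with the $2$-regular union of $\langle g\rangle$-cosets $x\leftrightarrow x\pm g$, which consists of genuine cycles --- not a doubled matching edge --- unless $2g=0$, true for at most $2^r$ values of $g$. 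Consequently, after deleting the vertices lying on a triangle of $H$ (each satisfying $2x=h$ for some $h$ in a set determined by $S_0$, so only $O(2^r)$ of them for $j$ bounded), $H$ becomes triangle-free on $n/2-o(n)$ vertices, with $\De(H)\le 3j$ and at least $n/4+\Omega(jn)$ edges --- a \emph{linear} surplus over a perfect matching. When $j$ is bounded, Lemma~\ref{lem-mis-almosttrifree} gives $\mis(H)\le 2^{n/4-\Omega(n)}$ with only $n^{O(1)}$ choices of $S_0$; when $j$ is large, $H$ has minimum degree $\ge 2j-o(j)\to\infty$ with $\De(H)\le(1+o(1))\de(H)$, so Lemma~\ref{lem-mis-regular-dense} gives $\mis(H)\le 2^{o(n)}\cdot 3^{(3/5+o(1))(n/2)/3}=2^{((\log 3)/10+o(1))n}$, comfortably below $2^{n/4}$, with the choices of $S_0$ absorbed. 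The main obstacle is making this uniform over \emph{all} values of $j$, and over how far the near-extremal containers may be from $\phi^{-1}(1)$: for intermediate $j$ neither MIS lemma is individually sharp, so one needs a careful interpolation exploiting the exact ``matching $+$ cycles'' description of $H$, together with the bounds on its triangles and on its departure from regularity --- all governed by the number of solutions of equations $2x=h$, hence by the hypothesis $2^r=o(n)$. The value $c=10^{-4}$ is a safe outcome of this balancing. This is precisely where $\bZ_2^k$ is exceptional: there $2x\equiv0$, every such $H$ is a union of matchings, the cycle surplus disappears, and the bound $2^{n/4}=2^{\mu(G)/2}$ is sharp.
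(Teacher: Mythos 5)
Your skeleton matches the paper's: reduce via containers and the removal lemma to counting maximal independent sets in a link graph $L_S[B]$ with $B$ a coset of an index-$2$ subgroup (by the Green--Ruzsa stability lemma) and $S$ a small sum-free generating set, then split on $|S|$, using Lemma~\ref{lem-mis-regular-dense} for large $|S|$ and finer arguments for small $|S|$. But the heart of the proof is exactly the part you defer to ``a careful interpolation,'' and two of the concrete claims you make there are false.

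First, your assertion that every triangle vertex satisfies an equation $2x=h$ with $h$ determined by $S_0$ (so that deleting $O(2^r)$ vertices makes the link graph triangle-free) fails whenever $S_0$ contains an element $s$ of order $3$: then the type-$1$ edges alone decompose $B$ into $|B|/3$ vertex-disjoint triangles $\{x,x+s,x+2s\}$ with no constraint on $x$, and no small hitting set exists. The paper treats this case separately (Case~1, $\ell=3$), showing the graph is essentially a disjoint union of pairs of triangles joined by perfect matchings, each pair contributing $6$ maximal independent sets, whence $\mis(\Ga)\le 6^{n/12}\le 2^{0.45\mu(G)}$. Second, $\De(\Ga)\le(1+o(1))\de(\Ga)$ is not true in general; Claim~\ref{cl-degree-evengroup}(iii) only gives $\De\le 2\de$. (This is harmless for the large-$|S|$ regime, since $k=2$ in Lemma~\ref{lem-mis-regular-dense} still yields roughly $3^{n/9}<2^{0.24n}$, as in Claim~\ref{cl-largeS}.)

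More importantly, for $2\le|S|\le 10^4$ the link graph is in general neither triangle-free after few deletions (so Lemma~\ref{lem-mis-almosttrifree} does not apply directly) nor dense enough for Lemma~\ref{lem-mis-regular-dense} to beat $2^{n/4}$. The paper closes this gap with a genuinely new tool, Lemma~\ref{lem-mis-cayley}: a stability version of the Moon--Moser bound, $\mis(\Ga)\le C\cdot 3^{n/3-k/(13\De)}$ for an $n$-vertex graph with $n+k$ edges and maximum degree $\De$, proved by induction via the recursion $\mis(\Ga)\le\mis(\Ga\setminus\{v\})+\mis(\Ga\setminus N[v])$. Combined with the edge count of Claim~\ref{cl-edges-fewevens} (which uses Fact~\ref{fact-order2} and the hypothesis $2^r=o(n)$ to show $e(\Ga)\ge\De(\Ga)|B|/2-o(n)$), this handles all remaining cases uniformly. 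Without this lemma or an equivalent substitute, your argument does not cover the cases $S=\{s,-s\}$ and $3\le|S|\le 10^4$, so the proof is incomplete at its crux.
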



\subsection{Connection between sum-free sets and independent sets}
In this subsection, we will reduce the problem of estimating $\fm(G)$ to bounding $\mis(\Ga)$, for some Cayley-like graph $\Ga$. For subsets $B, S\subseteq G$, let $L_S[B]$ be the \emph{link graph of $S$ on $B$} defined as follows. The vertex set of $L_S[B]$ is $B$. The edge set of $L_S[B]$ consists of the following two types of edges:

\begin{enumerate}[leftmargin=*]
	\item[(i)] Two vertices $x$ and $y$ are adjacent if there exists an element $z\in S$ such that $\{x,y,z\}$ is a Schur triple; 
	\item[(ii)] There is a  loop at a vertex $x$ if $\{x,x, z\}$ or $\{x, z,z'\}$ is a Schur triple for some $z, z'\in S$.
\end{enumerate}

We will use the following container theorem of Green and Ruzsa (Proposition $2.1$ in~\cite{GR-g}). See also~\cite{BMS-container} for more on container method.
\begin{lemma}\label{lem-container}
	For all finite abelian group $G$, there is a family $\cF$ of subsets of $G$ with the following properties.
	\begin{enumerate}
		\item\label{it-fewsums} Every $F\in \cF$ has at most $(\log n)^{-1/9}n^2$ Schur triples.
		\item\label{it-containsall} If $S\subseteq G$ is sum-free, then $S$ is contained in some $F\in \cF$.
		\item\label{it-fewsets} $|\cF|=2^{n(\log n)^{-1/18}}$.
		\item\label{it-small} Every member of $\cF$ has size at most $\mu(G)+n(\log n)^{-1/50}$.
	\end{enumerate}
\end{lemma}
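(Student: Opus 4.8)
The plan is to realise sum-free sets as independent sets in a $3$-uniform hypergraph and then invoke the hypergraph container method. Let $\mathcal{H}=\mathcal{H}(G)$ be the hypergraph on vertex set $G$ whose edges are the Schur triples $\{x,y,z\}$ with $x+y=z$; degenerate triples (those with a repeated coordinate, e.g.\ $2x=z$) number only $O(n)$ and may be discarded. A set is sum-free precisely when it is independent in $\mathcal{H}$, so property~(2) is exactly the statement that $\cF$ is a family of containers for the independent sets of $\mathcal{H}$. The parameters of $\mathcal{H}$ are favourable: it has $n$ vertices and $e(\mathcal{H})=\Theta(n^2)$ edges, every vertex lies in $\Theta(n)$ triples (so $\mathcal{H}$ is essentially regular of degree $\Theta(n)$), any two vertices lie in at most three common triples, and the $3$-codegree is at most $1$.

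First I would apply the container lemma with a parameter $\tau\asymp n^{-1/2}$. The codegree bounds above are exactly what is needed to meet the lemma's hypotheses at this value of $\tau$: with $d=\Theta(n)$ one needs $\Delta_2(\mathcal{H})\lesssim \tau d$ and $\Delta_3(\mathcal{H})\lesssim \tau^2 d$, and both hold. A single application yields containers each carrying at most a $(1-c)$-fraction of the edges, with fingerprints of size $O(\tau n)=O(n^{1/2})$ and hence at most $2^{O(n^{1/2}\log n)}$ containers in total. Iterating the lemma, each time rescaling $\tau$ to match the current (decreasing) average degree, drives the edge count down by a constant factor per step; after $O(\log\log n)$ steps it falls below $(\log n)^{-1/9}n^2$, giving property~(1). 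Because the fingerprint sizes grow by only a bounded geometric factor, the number of containers stays of the form $2^{n^{1/2+o(1)}}$, comfortably within the bound $2^{n(\log n)^{-1/18}}$ demanded by property~(3); property~(2) is maintained throughout, since containers always contain the independent sets they refine.

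It remains to obtain property~(4), and this is where the real content lies. Knowing only that a container $F$ has at most $(\log n)^{-1/9}n^2$ Schur triples does not by itself bound $|F|$: I must invoke a removal lemma for the equation $x+y=z$, asserting that a set with at most $\ep n^2$ Schur triples can be made sum-free by deleting at most $\delta(\ep)n$ elements, whence $|F|\le \mu(G)+\delta(\ep)n$ by Theorem~\ref{thm-MSF-Groups}. Substituting $\ep=(\log n)^{-1/9}$ and using the quantitative rate of this removal lemma is precisely what produces the admissible error $n(\log n)^{-1/50}$; the exponents $1/9$ and $1/50$ in the statement are calibrated to that rate.

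The hard part is this last step. A naive double-counting or Fourier supersaturation fails, because extremal sum-free sets are highly structured (they carry large Fourier coefficients), so density alone does not force many Schur triples. One genuinely needs a structural/removal statement for a single linear equation over abelian groups, established by Fourier-analytic means; this is the substance of Green and Ruzsa's original argument, which in fact packages all four properties together through an arithmetic regularity lemma rather than through the container iteration sketched above. Verifying the container hypotheses and bookkeeping the iteration are routine by comparison.
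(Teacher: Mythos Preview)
Your approach is sound and is a genuine alternative to what the paper does. The paper does not actually prove this lemma: items~(1)--(3) are quoted verbatim as Proposition~2.1 of Green--Ruzsa~\cite{GR-g}, whose proof goes through an arithmetic regularity lemma rather than hypergraph containers; the paper then adds the one-line observation that item~(4) follows from item~(1) together with the supersaturation statement, Proposition~2.2 of~\cite{GR-g}. Your container iteration would in fact yield a far smaller family (of size $2^{n^{1/2+o(1)}}$) than~(3) demands, which is a bonus the paper does not need.

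The one point where your write-up overshoots is the derivation of~(4). You invoke a \emph{removal} lemma for $x+y=z$; the paper instead invokes \emph{supersaturation}: any $A\subseteq G$ with $|A|\ge \mu(G)+\eta n$ contains at least $c(\eta)n^{2}$ Schur triples. Contraposing gives $|F|\le \mu(G)+\eta n$ once $F$ has at most $c(\eta)n^{2}$ triples, which is exactly how~(1) feeds into~(4). This matters quantitatively: Green's arithmetic removal lemma has tower-type dependence, so one cannot turn the input $\varepsilon=(\log n)^{-1/9}$ into the output $(\log n)^{-1/50}$ via removal as you suggest; the polylogarithmic exponents in the statement are inherited from Green--Ruzsa's own argument, where supersaturation comes with a polynomial rate in $\eta$. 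Your remark that ``naive Fourier supersaturation fails'' is therefore slightly off: supersaturation is precisely what is used, and its proof in~\cite{GR-g} (via the structure of near-extremal sum-free sets) is strong enough. Replacing your appeal to the removal lemma by an appeal to supersaturation fixes the quantitative mismatch and brings your argument in line with the paper's.
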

Note that the last property is not mentioned in~\cite{GR-g}. It follows form property(\ref{it-fewsums}) and supersaturation of sum-free sets in abelian groups (Proposition~$2.2$ in~\cite{GR-g}).

We refer to the sets in $\cF$ as \emph{containers}. For the rest of this section, fix an arbitrary group $G$  of order $n$ that satisfies the hypothesis of Theorem~\ref{thm-evengroupsequiv}. Let $F\in \cF$ be an arbitrary container. Recall that since $G$ is an even order group, $\mu(G)=n/2$. Thus, by Lemma~\ref{lem-container}~(\ref{it-containsall}) and~(\ref{it-fewsets}), to prove Theorem~\ref{thm-evengroupsequiv}, it suffices to show that, there exists a constant $c>0$ such that $f_{\max}(F) \leq 2^{(1/4-c)n}$, where $f_{\max}(F)$ denotes the number of maximal sum-free subsets of $G$ that lie in $F$.

By a group removal lemma of Green (Theorem~$1.4$ in~\cite{G-R}, see also~\cite{ksv}), $F=B\cup C$, where $B$ is sum-free and $|C|=o(n)$.  Notice that every maximal sum-free subset of $G$ in $F$ can be built in the following two steps: 
\begin{enumerate}
	\item[(1)] Choose a sum-free set $S$ in $C$; 
	\item[(2)] Extend $S$ in $B$ to a maximal one.
\end{enumerate}
Since the set $C$ is small, the number of choices for the first step is negligible. We will use the following lemma from~\cite{BLST} to bound the number of choices in the second step.
\begin{lemma}[\cite{BLST}]\label{lem-mis}
	Suppose that $B,S \subseteq G$ are both sum-free. If $I\subseteq B$ is  such that $S\cup I$ is a maximal sum-free subset of $G$, then $I$ is a maximal independent set in $L_S[B]$.
\end{lemma}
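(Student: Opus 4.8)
The plan is to check the two requirements on $I$ in turn: that it is an independent set of $L_S[B]$, and that it is maximal among such.

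\emph{Independence} follows directly from $S\cup I$ being sum-free. If $x,y\in I$ span a type~(i) edge witnessed by $z\in S$, then $\{x,y,z\}\subseteq S\cup I$ is a Schur triple, so some relation $a+b=c$ holds with $a,b,c\in S\cup I$, contradicting sum-freeness; the only thing to watch is the degenerate case where $z$ coincides with $x$ or $y$, which still forces a forbidden relation such as $2x=y$ inside $S\cup I$. If there is a type~(ii) loop at $x\in I$, then either $2x=z$ for some $z\in S$ (the alternative $x+z=x$ is impossible since $0$ lies in no sum-free set), so that $x+x\in S\cup I$, or $\{x,z,z'\}$ with $z,z'\in S$ is a Schur triple lying inside $S\cup I$; either way sum-freeness is violated. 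Hence $I$ is independent.

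\emph{Maximality}: suppose for contradiction that some $x\in B\setminus I$ has $I\cup\{x\}$ independent in $L_S[B]$. I would show that then $S\cup I\cup\{x\}$ is sum-free; since $x\notin I$ and $x\notin S$ (in our application $S\subseteq C$ may be taken disjoint from $B$, so this holds), this is a proper sum-free superset of the maximal set $S\cup I$, a contradiction. To prove sum-freeness, take a hypothetical Schur triple $\{a,b,c\}$, say $a+b=c$, inside $S\cup I\cup\{x\}$. As $S\cup I$ is sum-free the triple must involve $x$, and as $I\cup\{x\}\subseteq B$ is sum-free it cannot lie entirely in $I\cup\{x\}$, so it also involves some element of $S$. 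A short case analysis on how many of $a,b,c$ equal $x$ then rules out every possibility: if two or three equal $x$ one gets $x=0$ or $2x\in S\cup I$, reducing to a type~(ii) loop at $x$ (if $2x\in S$) or to $I\cup\{x\}$ not being sum-free (if $2x\in I$); and if exactly one equals $x$, the relation $a+b=c$ together with the memberships of the other two entries yields precisely a type~(ii) loop at $x$ (both others in $S$) or a type~(i) edge from $x$ into $I$ (one in $S$, one in $I$), contradicting the independence of $I\cup\{x\}$.

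The only mildly delicate point is the bookkeeping in the last paragraph: one must track how a Schur triple can distribute across the three layers $\{x\}$, $I$, $S$, handling repeated entries like $\{x,x,z\}$ as well as the witness $z\in S$ of an edge coinciding with another vertex. I do not expect a real obstacle here; the one structural remark worth recording is that $L_S[B]$ does not encode Schur triples lying wholly inside $B$, which is exactly why the hypothesis that $B$ is sum-free is indispensable for the maximality direction.
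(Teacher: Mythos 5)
Your argument is correct and is essentially the standard proof of this lemma; note that the paper itself gives no proof but imports the statement from \cite{BLST}, where exactly this unwinding of the definition of $L_S[B]$ (independence from sum-freeness of $S\cup I$, maximality by showing $S\cup I\cup\{x\}$ would otherwise be sum-free) is carried out. The point you rightly flag --- that the extending vertex $x$ must not lie in $S$, so that $S\cup I\cup\{x\}$ is a \emph{proper} superset of $S\cup I$ --- is genuinely needed (as literally stated the lemma can fail when $S$ meets $B$, e.g.\ $G=\bZ_{10}$, $B$ the odd residues, $S=\{5\}$, $I=\{1,3,7,9\}$), and it is satisfied in the paper's application since $S\subseteq C$ is disjoint from $B$.
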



 For a fixed $S$, by Lemma~\ref{lem-mis}, the number of extensions of $S$ in $B$ in Step (2) is at most $\mis(L_S[B])$. Thus,
\begin{align*}
\fm(F)\le 2^{o(n)}\cdot \max_{\substack{S\subseteq C\\ S\text{ is sum-free}}}\mis(L_S[B]).
\end{align*}
Therefore, it suffices to show that there exists a positive constant $c$ such that 
\begin{align}\label{eq-sufficient}
	\mis(L_S[B])\le 2^{(1/4-c)n}.
\end{align}

\subsection{A new bound for maximal independent sets of dense graphs}
We will use the following lemma to bound the number of maximal independent sets in the link graph. This lemma can be viewed as a stability version of Moon and Moser's bound~\eqref{eq-mis-MM}.
\begin{lemma}\label{lem-mis-cayley}
	Let $k\in \bZ$, $\Delta\in\bN$, and $C\ge 3^{\Delta/13}$. Let $\Ga$ be an $n$-vertex graph with $n+k$ edges and maximum degree $\Delta$, then 
	$$\mis(\Ga)\le C\cdot 3^{\frac{n}{3}-\frac{k}{13\Delta}}.$$
\end{lemma}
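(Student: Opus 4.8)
The plan is to prove the bound by induction on $n$ (or more precisely on $e(\Ga)$), mimicking the classical deletion argument behind Moon--Moser's bound~\eqref{eq-mis-MM} but bookkeeping the edge surplus carefully. Recall that the standard proof of~\eqref{eq-mis-MM} picks a vertex $v$ of minimum degree $d$ and observes that every maximal independent set either avoids $N[v]$ or contains some neighbour $u\in N(v)$; in the first case it is a maximal independent set of $\Ga\setminus N[v]$, in the second it extends to one of $\Ga\setminus N[u]$ for one of the $d$ neighbours $u$. This yields a recursion that, optimised, gives the $3^{n/3}$ bound, with equality forcing disjoint triangles. Here I want to track the quantity $k=e(\Ga)-n$ through the recursion and show it only improves the bound.

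First I would set up the base cases: when $n$ is bounded by a constant depending on $\Delta$, or when $k$ is very negative, the factor $C\ge 3^{\Delta/13}$ absorbs everything (note also that if $k\le -n/3$ the statement is trivial since $\mis(\Ga)\le 2^n$, say, or one can argue $\Ga$ has an isolated-ish structure). For the inductive step, pick a vertex $v$ realising $\de(\Ga)=d$. Deleting $N[v]$ removes $d+1$ vertices and at least $\binom{d+1}{2}$ edges if $N[v]$ spans many edges, but in the worst case (when $N[v]$ is close to a star) it removes roughly $d$ edges plus whatever leaves $N[v]$; the key inequality is that passing to $\Ga\setminus N[u]$ for a neighbour $u$ decreases the vertex count by $d(u)+1\ge d+1$ while the edge surplus $k$ decreases by at most $\De=\Delta$ in the adversarial case. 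So after removing $N[u]$ we have a graph on $n-(d+1)$ vertices with edge surplus at least $k-\Delta$ — wait, more carefully: removing a vertex of degree $\le\Delta$ changes $e-n$ by (at most $\Delta$) $-1$, i.e.\ by at most $\Delta-1$; removing $d+1$ vertices around $u$ changes it by at most $(d+1)(\Delta-1)$ minus the internal edges, but I only need a crude bound, namely $k$ drops by at most $(d+1)\Delta$. Then the recursion reads, writing $M(n,k)$ for the claimed bound $C\cdot 3^{n/3-k/(13\Delta)}$,
\begin{align*}
\mis(\Ga)\le \mis(\Ga\setminus N[v]) + \sum_{u\in N(v)}\mis(\Ga\setminus N[u]) \le M(n-d-1,k') + d\cdot M(n-d-1,k''),
\end{align*}
where $k',k''$ are the new surpluses. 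The first term is easy since deleting $N[v]$ with $v$ of minimum degree can only raise the average degree, hence cannot make the surplus much worse; here one uses that the $d+1$ vertices of $N[v]$ carry at least $d$ edges (the star at $v$), so $k'\ge k - (d+1)\Delta + d\ge$ something controllable, and crucially $n$ drops by $d+1\ge 1$. For the sum, each $\Ga\setminus N[u]$ has $n-d-1$ vertices and surplus $\ge k - (d+1)\Delta$, so by induction it is at most $C\cdot 3^{(n-d-1)/3 - (k-(d+1)\Delta)/(13\Delta)}$.

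The heart of the calculation is then to verify that
\begin{align*}
3^{-(d+1)/3}\cdot\Bigl(1 + d\cdot 3^{(d+1)/13}\Bigr)\le 1,
\end{align*}
coming from collecting the $3^{-(d+1)/3}$ from the vertex deletion against the $d$ branches each paying an extra $3^{(d+1)\Delta/(13\Delta)}=3^{(d+1)/13}$ in the exponent. One checks this holds for $d=0,1$ trivially, for $d=2$ it is $3^{-1}(1+2\cdot 3^{3/13})$ which needs to be $\le 1$, i.e.\ $2\cdot 3^{3/13}\le 2$ — that is false, so the constant $13$ is chosen precisely to make a slightly different grouping work: one should split off only one neighbour at a time or, better, use that if $d=\de(\Ga)\ge 2$ then either some edge lies inside $N[v]$ (giving extra edges deleted, improving $k$) or $N[v]$ is an independent set and then the $d$ sets $\Ga\setminus N[u]$ overlap enough to beat the naive sum. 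The main obstacle, and where I would spend the most care, is exactly this: matching the constant so that the $d=2$ and $d=3$ cases close, which is why triangle-heavy graphs (the extremal case of Moon--Moser) are where the $-k/(13\Delta)$ gain is smallest, and one must show a triangle contributes enough to the edge surplus to compensate. I expect the clean way is a two-step deletion (remove a triangle or a vertex of degree $\ge 3$) with a weight function argument, choosing $13$ as the smallest integer making all finitely many small-degree inequalities valid.
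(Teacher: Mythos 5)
Your overall strategy---induction on the graph while tracking the edge surplus $k=e(\Ga)-n$---is the right one, but the recursion you chose does not close, and you concede as much yourself: with the classical Moon--Moser branching $\mis(\Ga)\le\mis(\Ga\setminus N[v])+\sum_{u\in N(v)}\mis(\Ga\setminus N[u])$ from a minimum-degree vertex $v$, the case $d=2$ requires $3^{-1}\bigl(1+2\cdot 3^{3/13}\bigr)\le 1$, which is false. This failure is not an artefact of sloppy constants: $d=2$ is precisely the regime where Moon--Moser is tight (disjoint triangles), so the multi-branch recursion has zero slack there and cannot absorb any loss coming from the surplus term, no matter how the constant $13$ is tuned. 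The repairs you then float (two-step deletions, weight functions, exploiting overlaps among the sets $N[u]$) are named but not carried out, so the proof has a genuine gap at its central inequality.

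The paper's proof avoids the obstruction with two changes you should note. First, it uses the two-branch recursion $\mis(\Ga)\le\mis(\Ga\setminus\{v\})+\mis(\Ga\setminus N[v])$ of~\eqref{eq-mis-nbrhd} instead of the $(d+1)$-branch one. Second, it branches on a vertex of \emph{large} degree $d$ with $3\le d\le\De$, which exists as soon as $k\ge 1$ since the average degree then exceeds $2$; the remaining regimes are handled directly by Moon--Moser: $k\le 0$ is trivial, and $0<k<\De^2$ is absorbed by the hypothesis $C\ge 3^{\De/13}$ (this is exactly what that otherwise odd-looking assumption is for). With these choices, deleting $v$ costs a factor $3^{-1/3+(d-1)/(13\De)}$, deleting $N[v]$ costs at most $3^{-(d+1)/3+d/13-(d+1)/(13\De)}$, and the sum of these two is at most $0.9997$ for all $3\le d\le\De$, so the induction closes uniformly and the problematic $d=2$ case never arises. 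The rest of your setup (base cases, monotonicity in $\De$ when passing to subgraphs, the role of $C$) carries over once you switch to this recursion.
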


We remark that the lemma above is optimal up to the factor $13$ in the exponent. Indeed, consider the graph $H$ consisting of disjoint union of $K_4$'s. This graph is $3$-regular and has $3n/2$ edges and so $k=n/2$ with $\mis(H)=4^{n/4}$. Solving $4^{1/4}=3^{1/3-1/(6c)}$, we see that the constant $13$ cannot be improved to $c<9.32$.

We will use the following fact: write $N[v]:=N(v)\cup \{v\}$.
\begin{align}\label{eq-mis-nbrhd}
\mis(\Ga)\le \mis(\Ga\setminus\{v\})+\mis(\Ga\setminus N[v]).
\end{align}

\begin{proof}[Proof of Lemma~\ref{lem-mis-cayley}]
	We use induction on $n$. For the base case, suppose that $n\le 3$, then $k\le 0$. Thus, by~\eqref{eq-mis-MM}, the lemma trivially holds. 
	Now, let $\Ga$ be an $n$-vertex graph that satisfies the hypothesis of the lemma. If $k\le 0$, then the lemma trivially holds. Therefore, $e(\Ga)-n\ge 1$, and $\De\ge 3$. Also, if $k<\De^2$, by~\eqref{eq-mis-MM}, we have 
	\begin{align*}
	\mis(\Ga)\le 3^{\frac{n}{3}}= 3^{\frac{\Delta}{13}}\cdot 3^{\frac{n}{3}-\frac{\Delta}{13}}\le C\cdot 3^{\frac{n}{3}-\frac{k}{13\Delta}}.
	\end{align*}
	We may assume $k\ge \Delta^2$. Fix a vertex $v$ of degree $d$ with $3\le d\le \Delta$. Let $\Ga':=\Ga\setminus\{v\}$ with $\Delta':=\Delta(\Ga')$, $n':=v(\Ga')=n-1$ and
	$$e(\Ga')=(n-1)+(k-d+1)=:n'+k';$$ 
	and $\Ga'':=\Ga\setminus N[v]$ with $\Delta'':=\Delta(\Ga'')$, $n'':=n-d-1$ and 
	$$e(\Ga'')\ge n+k-d\Delta=(n-d-1)+(k-d\Delta+d+1)=:n''+k''.$$
	As $k\ge\Delta^2$, we have $k',k''>0$. By induction hypothesis on $\Ga'$ and $\Ga''$, and that $\Delta',\Delta''\le\Delta$, we get
	\begin{align*}
	\mis(\Ga')\le C\cdot 3^{\frac{n-1}{3}-\frac{k-d+1}{13\Delta' }}\le C\cdot 3^{\frac{n-1}{3}-\frac{k-d+1}{13\Delta }}=C\cdot 3^{\frac{n}{3}-\frac{k}{13\Delta }}\cdot 3^{-\frac{1}{3}+\frac{d-1}{13\Delta }},
	\end{align*}
	and
	\begin{align*}
	\mis(\Ga'')&\le C\cdot 3^{\frac{n-d-1}{3}-\frac{k-d\Delta +d+1}{13\Delta''}}\le C\cdot 3^{\frac{n}{3}-\frac{d+1}{3}-\frac{k-d\Delta +d+1}{13\Delta }}\\
	&= C\cdot 3^{\frac{n}{3}-\frac{k}{13\Delta }}\cdot 3^{-\frac{d+1}{3}+\frac{d}{13}-\frac{d+1}{13\Delta }}.
	\end{align*}
	This finishes the proof as $\mis(\Ga)\le \mis(\Ga')+\mis(\Ga'')$ due to~\eqref{eq-mis-nbrhd}, and 
	\begin{align*}
	3^{-\frac{d+1}{3}+\frac{d}{13}-\frac{d+1}{13\Delta }}+3^{-\frac{1}{3}+\frac{d-1}{13\Delta }}\le 0.9997,
	\end{align*}
	subject to $3\le d\le \Delta $.
\end{proof}

\subsection{Proof of~\eqref{eq-sufficient}}

We will use the following definitions and notations throughout the rest of this section. For disjoint sum-free subsets $A,A'\subseteq F$, we call an edge $xy\in E(L_A[A'])$ a \emph{type~1 edge} if $x-y=a$, for some $a\in A\cup (-A)$. Otherwise, if $x+y=a$, for some $a\in A$, we call it a \emph{type~2 edge}.  Denote by $E_1(L_A[A'])$ and $E_2(L_A[A'])$ the set of type $1$ and $2$ edges, respectively.  For $i\in [2]$ and $x\in A'$, let $e_i(L_A[A']):=|E_i(L_A[A'])|$ and $d_i(x,L_A[A'])$ be the number of type $i$ edges incident to $x$ in $L_A[A']$. We will omit $L_A[A']$ from the above notations whenever clear from the context. 

Let $\Gamma:=L_S[B]$. Recall that $\mu(G)=n/2$ and $B$ is sum-free, therefore, $v(\Ga)\le n/2$. Denote by $\Ga_1$ and $\Ga_2$ the subgraph of $\Ga$ consisting of type $1$ and $2$ edges, respectively.

We claim that we may assume $B$ is a coset of some index $2$ subgroup of $G$. Indeed, if $|B|\le 4n/9$, then by~\eqref{eq-mis-MM}, we have 
\begin{equation}\label{eq-smallB}
	\mis(\Gamma)\le 3^{|B|/3}=3^{4n/27}<2^{0.24 n},
\end{equation}
as desired. Therefore, we can assume that $|B|>4n/9$, which, together with Lemma~\ref{lem-type1-group-stability}, implies that $B\subseteq G$ consists of elements with odd values in exactly one of the first $r$ coordinates, and $S\subseteq G$ is a subset of the set of elements with even values in the same coordinate. Note that adding new vertices to a graph will not decrease the number of maximal independent sets, and thus without loss of generality we can assume that
\begin{align*}
B=\{1,3,5,7,\ldots,2^{\alpha_{1}}-1\}\oplus \bZ_{2^{\alpha_2}}\oplus \cdots\oplus\bZ_{2^{\alpha_r}}\oplus \bZ_{p_1^{\beta_1}}\oplus \cdots\oplus \bZ_{p_t^{\beta_t}};
\end{align*}
and the generating set is 
\begin{align*}
S\subseteq \{0,2,4,6\ldots,2^{\alpha_{1}}-2\}\oplus \bZ_{2^{\alpha_2}}\oplus \cdots\oplus\bZ_{2^{\alpha_r}}\oplus \bZ_{p_1^{\beta_1}}\oplus \cdots\oplus \bZ_{p_t^{\beta_t}}.
\end{align*}
We first show that~\eqref{eq-sufficient} holds for  large $S$. We will use the following claim, which bounds the degree of every vertex, as well as, their type~$1$ and type~$2$ degree.
\begin{claim}\label{cl-degree-evengroup}
	For all $x\in B$,
\begin{enumerate}
	\item[(i)] $d_1(x,\Ga)=|S\cup(-S)|$,
	\item[(ii)] $d_2(x,\Ga)\le|S|$ with equality if and only if  $2x\notin S+(S\cup (-S))$.
\end{enumerate}
	Consequesntly,
\begin{enumerate}
	\item[(iii)] $|S\cup (-S)|\le\de(\Ga)\le\De(\Ga)\le |S\cup (-S)|+|S|\le 2\de(\Ga).$  
\end{enumerate}
\end{claim}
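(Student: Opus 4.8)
The entire claim hinges on a single structural feature of the sets $B$ and $S$: every element of $B$ has odd first coordinate, while every element of $S$, of $-S$, and hence of $S+S$ and of $S-S$, has even first coordinate; moreover $0\notin S$ since $S$ is sum-free. The plan is to convert each of the three assertions into a counting statement about a translate of $S$ or of $S\cup(-S)$, while scrupulously respecting the tie-break in the definition of edge types: an edge $xy$ is type~$2$ only when it is not already type~$1$.

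For (i): a vertex $y\neq x$ spans a type~$1$ edge with $x$ precisely when $x-y\in S\cup(-S)$, i.e.\ when $y\in x+(S\cup(-S))$ (using that $S\cup(-S)$ is symmetric). I would note that the translation $a\mapsto x+a$ is injective on $S\cup(-S)$, that its image lies in $B$ since the sum of an odd and an even element is odd, and that it avoids $x$ because $0\notin S\cup(-S)$; the last point also rules out a type~$1$ loop at $x$. Hence $d_1(x,\Ga)=|S\cup(-S)|$ for every $x\in B$. For (ii): the vertices $y\in B$ with $x+y\in S$ form the translate $S-x$, which again sits inside $B$ and has exactly $|S|$ elements, and which contains $x$ itself precisely when $2x\in S$. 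The type~$2$ incidences at $x$ are what remains after (a) reinterpreting the diagonal $y=x$ as the (type~$2$) loop at $x$, which is present exactly when $2x\in S$, and (b) deleting those $y$ for which $xy$ is in fact a type~$1$ edge, i.e.\ $x-y\in S\cup(-S)$. A short count then yields $d_2(x,\Ga)=|S|-N_x$, where $N_x$ is the number of $y\in B\setminus\{x\}$ with both $x+y\in S$ and $x-y\in S\cup(-S)$; in particular $d_2(x,\Ga)\le|S|$. For the equality statement I would prove $N_x\ge 1\iff 2x\in S+(S\cup(-S))=(S+S)\cup(S-S)$: one direction is just $2x=(x+y)+(x-y)$; for the other, if $2x=s+t$ with $s\in S$ and $t\in S\cup(-S)$, then $y:=x-t$ lies in $B\setminus\{x\}$ (it is not $x$ because $t\neq 0$, and it lies in $B$ by the parity argument) and satisfies $x+y=s\in S$ and $x-y=t\in S\cup(-S)$, so $y$ is counted by $N_x$.

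Part (iii) then drops out: every vertex carries exactly $|S\cup(-S)|$ type~$1$ edges, so $\de(\Ga)\ge|S\cup(-S)|$; combining (i) and (ii) gives $\De(\Ga)\le|S\cup(-S)|+|S|$; and since $S\subseteq S\cup(-S)$ we have $|S|\le|S\cup(-S)|$, so $|S\cup(-S)|+|S|\le 2|S\cup(-S)|\le 2\de(\Ga)$. The one spot that needs genuine care — and which I expect to be essentially the only obstacle — is the bookkeeping around the possible loop at $x$ when $2x\in S$: one must count it consistently when passing between $d_2(x,\Ga)$, $d(x,\Ga)$ and the extremal degrees $\de,\De$, and must keep the type~$1$-over-type~$2$ precedence straight throughout. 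Once those conventions are pinned down, the two translation identities above deliver all three parts.
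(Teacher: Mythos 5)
Your proposal is correct and follows essentially the same route as the paper: part (i) via the injective translation $s\mapsto x+s$ on $S\cup(-S)$, part (ii) by starting from the $|S|$ candidate edges $(x,s-x)$ and characterising exactly when such an edge is pre-empted by being type~$1$ (equivalently $2x\in s+(S\cup(-S))$), and part (iii) as an immediate consequence. Your write-up is in fact somewhat more careful than the paper's about the parity of first coordinates, the injectivity of the translations, and the loop at $x$ when $2x\in S$, but these are refinements of the same argument rather than a different one.
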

\begin{proof}
	Fix an arbitrary element $x\in B$. For the first part, note that each $s\in S\cup (-S)$ generates a unique type~$1$ neighbour $y=x+s\in B$. For the second part, it is clear that $x$ is incident to exactly $|S|$ many edges of the form $(x,s-x)$. Fix an $s\in S$. It suffices to show that the edge $(x, s-x)\in E_2(L_S[B])$ if and only if $2x\notin s+(S\cup (-S))$. By definition, $(x,s-x)\in E_2(L_S[B])$, if and only if $(x,s-x)\notin E_1(L_S[B])$. This is equivalent to $s-x\neq x-s'$, for all $s'\in S\cup (-S)$. Which implies $2x\notin s+(S\cup (-S))$.
\end{proof}

\begin{claim}\label{cl-largeS}
	If $|S|\ge 10^4$, then $\mis(\Ga)\le 2^{0.24n}$.
\end{claim}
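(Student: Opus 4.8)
The plan is to combine the near-regularity of $\Ga:=L_S[B]$ recorded in Claim~\ref{cl-degree-evengroup} with the Moon--Moser-type improvement for almost-regular dense graphs, Lemma~\ref{lem-mis-regular-dense}. In the case under consideration $B$ has been replaced by the full coset displayed above, so $v(\Ga)=|B|=n/2$ (enlarging $B$ only increases $\mis$, as noted before the claim). By Claim~\ref{cl-degree-evengroup}(iii) we have $\de(\Ga)\ge |S\cup(-S)|\ge |S|\ge 10^4$ and $\De(\Ga)\le 2\,\de(\Ga)$, so $\Ga$ meets the hypotheses of Lemma~\ref{lem-mis-regular-dense} with $k=2$; moreover the parameter $b:=\sqrt{\de(\Ga)}$ satisfies $b\ge 100$.

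First I would apply Lemma~\ref{lem-mis-regular-dense} (its ``$n$'' being our $|B|=n/2$) to get
$$\mis(\Ga)\ \le\ \sum_{0\le i\le (n/2)/b}\binom{n/2}{i}\,3^{\frac{2}{9}\cdot\frac n2+\frac{2}{3b}\cdot\frac n2},$$
and then estimate the three factors with room to spare. Since $1/b\le 1/100<1/2$ and the binary entropy function $H$ is increasing on $[0,1/2]$, the binomial sum is at most $2^{H(1/b)\cdot n/2}\le 2^{H(1/100)\cdot n/2}\le 2^{0.081\cdot n/2}$. Next, $3^{\frac29\cdot\frac n2}=2^{(\frac29\log_2 3)\cdot\frac n2}\le 2^{0.353\cdot n/2}$. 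Finally, as $b\ge 100$, $3^{\frac{2}{3b}\cdot\frac n2}\le 3^{\frac{1}{150}\cdot\frac n2}\le 2^{0.011\cdot n/2}$. Multiplying, $\mis(\Ga)\le 2^{(0.081+0.353+0.011)\cdot n/2}=2^{0.445\cdot n/2}=2^{0.2225\,n}<2^{0.24\,n}$, as claimed.

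The argument is essentially a computation, so there is no genuine obstacle; the two points worth stating are that $B$ may indeed be taken to be the full coset (so that $v(\Ga)=n/2$, already justified in the text preceding the claim), and that the hypothesis $|S|\ge 10^4$ is used only to guarantee $b=\sqrt{\de(\Ga)}\ge 100$, which is exactly what is needed to push both the binomial factor and the error term $3^{2v(\Ga)/(3b)}$ below the available budget. There is substantial slack here --- the final exponent $0.2225\,n$ sits comfortably under $0.24\,n$ --- so no careful optimisation of constants is required; note in particular that a direct application of Moon--Moser, $\mis(\Ga)\le 3^{|B|/3}=2^{0.264\,n}$, would \emph{not} suffice, which is precisely why the near-regularity input is essential. (Loops in $\Ga$ cause no difficulty: they are permitted by the conventions of Section~\ref{sec-prelim} and by Lemma~\ref{lem-mis-regular-dense}, and sum-freeness of $S$ rules out any loop arising from a type-$1$ edge.)
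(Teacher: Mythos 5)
Your proposal is correct and follows essentially the same route as the paper: both apply Lemma~\ref{lem-mis-regular-dense} with $k=2$ to the graph on $|B|=n/2$ vertices, using Claim~\ref{cl-degree-evengroup}(iii) to get $\De(\Ga)\le 2\de(\Ga)$ and $b=\sqrt{\de(\Ga)}\ge 100$. The only cosmetic difference is that you bound the binomial sum via the entropy function where the paper uses $2\cdot(100e)^{n/200}$; both land comfortably below $2^{0.24n}$.
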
 	
\begin{proof}	
	Applying Lemma~\ref{lem-mis-regular-dense} with $k=2$ and $b=100$, we get
	\begin{align*}
	\mis(\Ga)\le \sum_{0 \le i\le n/200}{n/2\choose i}\cdot 3^{\frac{n}{9} + \frac{n}{300}}\le 2\cdot (100e)^{\frac{n}{200}}\cdot 3^{\frac{n}{9} + \frac{n}{300}}\le 2^{0.24n}.
	\end{align*}
\end{proof}

Thus, we may assume that $|S|<10^4$. The next claim will bound the density of $\Ga$ with size of $S$.
\begin{claim}\label{cl-edges-fewevens}
	We have
	\begin{align*}
		e(\Ga)\ge \frac{(|S\cup (-S)|+|S|)\cdot |B|}{2}-|S|\cdot |S\cup (-S)|\cdot 2^r\ge \frac{\De(\Ga)\cdot |B|}{2}-|S|\cdot |S\cup (-S)|\cdot 2^r.
	\end{align*}
\end{claim}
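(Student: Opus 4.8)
The plan is to estimate $e(\Ga)$ through the degree information in Claim~\ref{cl-degree-evengroup}, treating the type~$1$ and type~$2$ edges separately; since these classes are disjoint by definition, $e(\Ga)\ge e_1(\Ga)+e_2(\Ga)$. The type~$1$ count is immediate: Claim~\ref{cl-degree-evengroup}(i) gives $d_1(x,\Ga)=|S\cup(-S)|$ for every $x\in B$, and there are no type~$1$ loops (a type~$1$ loop at $x$ would require $0\in S\cup(-S)$, impossible as $S$ is sum-free), so summing over $x\in B$ yields $e_1(\Ga)=\tfrac12|S\cup(-S)|\cdot|B|$.

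For the type~$2$ edges I would double count pairs $(x,s)\in B\times S$. Each such pair determines the edge $\{x,s-x\}$ of $\Ga$ (here $s-x\in B$ because $B$ is the prescribed coset and $s$ has even first coordinate), and, as established in the proof of Claim~\ref{cl-degree-evengroup}(ii), this edge is a type~$2$ edge unless $2x-s\in S\cup(-S)$, i.e.\ unless $2x\in s+(S\cup(-S))$; call the pair \emph{bad} in the latter case. A bad pair corresponds to a solution of $2x=s+a$ with $s\in S$ and $a\in S\cup(-S)$, so by Fact~\ref{fact-order2}, applied for each of the at most $|S|\cdot|S\cup(-S)|$ choices of $(s,a)$, there are at most $|S|\cdot|S\cup(-S)|\cdot 2^r$ bad pairs. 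Every remaining pair produces a type~$2$ edge, and the assignment $(x,s)\mapsto\{x,s-x\}$ is at most $2$-to-$1$ (the only preimages of $\{x,s-x\}$ are $(x,s)$ and $(s-x,s)$), so $e_2(\Ga)\ge\tfrac12\big(|S|\cdot|B|-|S|\cdot|S\cup(-S)|\cdot 2^r\big)$.

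Combining the two bounds gives $e(\Ga)\ge e_1(\Ga)+e_2(\Ga)\ge\tfrac12(|S\cup(-S)|+|S|)\cdot|B|-\tfrac12|S|\cdot|S\cup(-S)|\cdot 2^r$, which is in fact slightly stronger than the first inequality in the claim, and the second inequality is then exactly $\De(\Ga)\le|S\cup(-S)|+|S|$ from Claim~\ref{cl-degree-evengroup}(iii). There is no substantial obstacle: the only thing to watch is the bookkeeping that makes the error term $|S|\cdot|S\cup(-S)|\cdot 2^r$ rather than the lossier $|S|^2\cdot|S\cup(-S)|\cdot 2^r$ that results from instead counting vertices of deficient type~$2$ degree and multiplying by $|S|$, together with the harmless observation that any stray loops arising from $\{x,z,z'\}$-type Schur triples only increase $e(\Ga)$ and so do not affect the lower bound.
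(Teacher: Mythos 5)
Your proof is correct and follows essentially the same route as the paper's: type~1 edges are counted exactly from Claim~\ref{cl-degree-evengroup}(i), and type~2 edges are lower-bounded by counting the pairs $(x,s)$ and discarding those for which $\{x,s-x\}$ is in fact a type~1 edge, the number of which is controlled via Fact~\ref{fact-order2}. Your bookkeeping even yields the marginally stronger error term $\tfrac12|S|\cdot|S\cup(-S)|\cdot 2^r$ (the paper subtracts the exceptional count before, rather than after, halving), but this makes no difference to the claim or its applications.
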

\begin{proof}
	By Claim~\ref{cl-degree-evengroup}(i), we only need to prove
	\begin{align*}
	e_2(\Ga)\ge \frac{|S|\cdot |B|}{2}-|S|\cdot|S\cup (-S)|\cdot 2^r.
	\end{align*}
	Define
	\begin{gather*}
		A:=\{(x,s-x): x\in B \text{ and } s\in S\}.
	\end{gather*}
	By definition $E_2\subseteq A\subseteq E$, and $e_2(\Ga)=|A|-|A\cap E_1|$. Then, it suffices to prove that $|A\cap E_1|\le|S|\cdot|S\cup (-S)|\cdot 2^r$. Recall that an edge $(x, s-x)\in A$ is a type~$1$ edge when $x-(s-x)=s'$, for some $s'\in S\cup (-S)$. Therefore, $|A\cap E_1|$ is at most the number of triples $(x,s,s')$ with $x\in B$, $s\in S$, $s'\in S\cup (-S)$, and $2x=s+s'$. By Fact~\ref{fact-order2}, the number of such triples is at most $|S|\cdot |S\cup(-S)|\cdot 2^r$,  yielding the desired bound.
\end{proof}

\case{1: $S=\{s\}$.} Let $\ell$ be the order of $s$. Next claim shows that if $s$ is not of order $3$, then $\Ga$ can be made triangle-free by removing $o(n)$ vertices.

\begin{claim}\label{cl-allevens-1gen-fewtri}
	If $\ell\neq 3$, then there exists a subset $B_{t}\subseteq B$ with $|B_t|\le 2^r=o(n)$ that intersects all triangles in $\Ga$, i.e. $V(T)\cap B_t\neq \emptyset$, for all triangles $T\subseteq \Ga$. 
\end{claim}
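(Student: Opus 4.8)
The plan is to classify the edges of $\Ga=L_{\{s\}}[B]$ into the two types introduced above and show that, as soon as $\ell\neq 3$, every triangle of $\Ga$ contains exactly one type~$2$ edge, whose two endpoints are forced to be solutions of $2x=-s$ and $2x=3s$ respectively; Fact~\ref{fact-order2} then supplies a hitting set of the required size.

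First I would record that $s\neq 0$: since $S=\{s\}$ is sum-free and $\{0,0,0\}$ is a Schur triple, $s\neq 0$, so $\ell\ge 2$. Now fix a triangle $T=\{u,v,w\}$ of $\Ga$, with $u,v,w$ distinct. Any two edges of $T$ share a vertex, so if two of them, say $uv$ and $vw$, were type~$2$ then $u+v=s=v+w$ would give $u=w$, a contradiction; hence $T$ has at most one type~$2$ edge. Next suppose $T$ has \emph{no} type~$2$ edge, i.e.\ all of $uv,vw,uw$ are type~$1$. Writing $u-v=\ep_1 s$, $v-w=\ep_2 s$, $u-w=\ep_3 s$ with $\ep_i\in\{1,-1\}$ and adding the first two relations, $(\ep_1+\ep_2-\ep_3)s=0$; since $\ep_1+\ep_2-\ep_3\in\{-3,-1,1,3\}$ and $s\neq 0$, this forces $3s=0$, i.e.\ $\ell=3$. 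So when $\ell\neq 3$, every triangle of $\Ga$ has exactly one type~$2$ edge.

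Now take a triangle $\{u,v,w\}$ with $\ell\neq 3$ and let $uv$ be its unique type~$2$ edge, so $u+v=s$. The edges $uw$ and $vw$ are type~$1$, so $u-w,\,v-w\in\{s,-s\}$, whence $u-v=(u-w)-(v-w)\in\{-2s,0,2s\}$; as $u\neq v$, in fact $u-v=\pm 2s$. Combining with $u+v=s$ gives
\[
\{2u,2v\}=\bigl\{(u+v)+(u-v),\,(u+v)-(u-v)\bigr\}=\{3s,-s\}.
\]
I would then set $B_t:=\{x\in B:\ 2x=-s\}$. By the above, for every triangle of $\Ga$ one endpoint of its unique type~$2$ edge lies in $B_t$, so $B_t$ intersects all triangles; and by Fact~\ref{fact-order2} applied to $g=-s$, $|B_t|\le 2^r$, which is $o(n)$ by the hypothesis $2^r=o(n)$ of Theorem~\ref{thm-evengroupsequiv}.

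This is a short case check rather than a deep argument, so I do not expect a genuine obstacle; the only points to watch are the degenerate orders. If $\ell=2$ then $s=-s$, so the two type~$1$ edges at $w$ force $u-w=s=v-w$, hence $u=v$ — thus $\Ga$ is already triangle-free and $B_t=\emptyset$ works. If $\ell=4$ then $3s=-s$ and the conclusion $\{2u,2v\}=\{3s,-s\}$ collapses to $2u=2v=-s$, which merely puts \emph{both} endpoints of the type~$2$ edge into $B_t$. The one step carrying real content is the observation that the single type~$2$ edge of a triangle pins its endpoints down to solutions of a fixed equation $2x=-s$, which is exactly where Fact~\ref{fact-order2} applies.
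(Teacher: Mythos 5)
Your proof is correct and takes essentially the same route as the paper's: show that every triangle has exactly one type~2 edge (using $3s\neq 0$ to exclude all-type-1 triangles and distinctness to exclude two type-2 edges), deduce that a vertex of each triangle satisfies a fixed equation of the form $2x=g$, and apply Fact~\ref{fact-order2}. The only (immaterial) difference is that you take $B_t=\{x:2x=-s\}$ whereas the paper takes $\{z:2z=3s\}$; both are valid hitting sets of size at most $2^r$.
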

\begin{proof}
	Let $T$ be a triangle in $\Ga$ with $V(T)=\{x,y,z\}$. First, we will show that $E(T)$ 
	contains exactly one type $2$ edge. Indeed, if $|E(T)\cap E_2|\ge 2$, say $xy, xz\in E_2$, then $y=s-x=z$, a contradiction. Otherwise, if $|E(T)\cap E_2|=0$, then it is not hard to check that either $x-y=s$, $y-z=s$, and $z-x=s$, in which case, we have $3s=0$, or $x-y=s$ and $x-z=s$, in which case $y=z$, leading to a contradiction in either case.
	
	Therefore, without loss of generality, we can assume that $xy,xz\in E_1$ and $yz\in E_2$. We also assume that $x-y=s$ (the case $y-x=s$ is almost identical). Then, we must have $z-x=s$, which, together with $y+z=s$, implies $2z=3s$. By Fact~\ref{fact-order2}, since $s$ is fixed, there are at most $2^r$ choices for $z$. Then $B_t:=\{z:2z=3s\}$ is the desired set.     
\end{proof}

Let $B_t$ be the set guaranteed by Claim~\ref{cl-allevens-1gen-fewtri}. Let $\Ga'=\Ga\setminus B_t$. By Claims~\ref{cl-degree-evengroup} and~\ref{cl-edges-fewevens}, all vertices in $\Ga$ have degree at most $3|S|=3$ and $e(\Ga)\ge 3|B|/2-2^{r+1}$, and thus, 
$$
e(\Ga')\ge e(\Ga)-3|B_t|\ge \frac{3|B|}{2}-2^{r+3}.
$$
Recall that $2^r=o(n)$, thus by Lemma~\ref{lem-mis-almosttrifree}, we get
\begin{align*}
\mis(\Ga)\le 2^{\frac{\mu(G)}{2}-\frac{\mu(G)}{900}+o(n)}\le  2^{0.499\mu(G)},
\end{align*}
as desired.
Therefore, we can assume that $\ell=3$. In this case,  $\Ga_1$ is a disjoint union of $|B|/3$ triangles, where the vertex set of each triangle is $\{x,x+s,x+2s\}$, for some $x\in B$. Let $s=(2a,b)$, where $a\in \bZ_{2^{\alpha_1-1}}$ and $b\in \bZ_{2^{\alpha_2}}\oplus \cdots\oplus\bZ_{2^{\alpha_r}}\oplus \bZ_{p_1^{\beta_1}}\oplus \cdots\oplus \bZ_{p_t^{\beta_t}}$. For type~$2$ edges, notice that there exists a loop at all vertices $x\in B$ such that $2x=s$. By Claim~\ref{cl-degree-evengroup}, we have that for all vertices $x\in B$ such that $2x\neq 0$ and $2x\neq 2s$, we have $d_2(x)=1$. We call a vertex $x\in B$ \emph{irregular} if $2x\in \{0,s,2s\}$. Then if in a triangle of $\Ga_1$ one of the vertices is irregular, the other two are irregular as well. Additionally, one of the vertices has a loop in $\Ga_2$. Therefore, by Fact~\ref{fact-order2}, there are at most $2^r$ irregular triangles, i.e. triangles with all irregular vertices. Denote by $\cT'$ and $\cT$ the set of irregular and all triangles in $\Ga$, respectively. Then it is not hard to see that there exists a partition of all triangles in  $\cT\setminus\cT'$ into pairs, $\{x,x+s,x+2s\}$ and $\{s-x,-x,-x-s\}$, for $x\in B$, such that $\Ga_2$ induces a perfect matching between each pair. Since the number of maximal independent sets for two disjoint triangles joined by a perfect matching is $6$, and the number of maximal independent sets for a triangle that contains a vertex with a loop is two, we obtain that 
\begin{align*}
\mis(\Ga)\le 6^{\frac{|\cT\setminus\cT'|}{2}}\cdot  2^{|\cT'|}\le 6^{\frac{n/6-|\cT'|}{2}}\cdot  2^{|\cT'|}\le  6^{\frac{n}{12}}\le 2^{0.45\mu(G)}, 
\end{align*}
which finishes the proof of~\eqref{eq-sufficient} for the case when $S$ is a singleton.

\case{2: $|S|=2$, and $S=\{s,-s\}$.}  In this case, $|S\cup (-S)|=2$, which together with Claims~\ref{cl-degree-evengroup} and~\ref{cl-edges-fewevens}, implies $\de(\Ga)\ge 2$, $\De(\Ga)\le 4$, and $e(\Ga)= n-4\cdot 2^r$. Therefore, we can apply Lemma~\ref{lem-mis-cayley} with $C=3^{4/13}$ and $k=n/2-4\cdot 2^r$, which shows that
\begin{align*}
\mis(\Ga)\le 3^{4/13}\cdot3^{\frac{n}{6}-\frac{n/2-4\cdot 2^r}{13\De}}\le 3^{\left(\frac{1}{6}-\frac{1}{104}+o(1)\right)n}\le 2^{0.2491n}.
\end{align*}

\case{3: $3\le |S|\le  10000$ or $S=\{s_1,s_2\}$  with $s_1\neq -s_2$.} In this case, $|S\cup (-S)|\ge 4$. Also, by Claim~\ref{cl-degree-evengroup}, we have that $4\le\De(\Ga)\le 3|S|$. Furthermore, by Claim~\ref{cl-edges-fewevens},
\begin{align*}
e(\Ga)\ge \frac{\De(\Ga)\cdot n}{4}-o(n).
\end{align*}
Therefore, we can apply Lemma~\ref{lem-mis-cayley} with $k=(\De(\Ga)-2)n/4-o(n)$ to get
\begin{align*}
\mis(\Ga)\le 3^{\frac{\De}{13}}\cdot 3^{\frac{n}{6}-\frac{n(\De-2)}{52\De}+o(n)}\le 3^{\frac{\De}{13}}\cdot 3^{\left(\frac{1}{6}-\frac{1}{52}+\frac{1}{104}+o(1)\right)n}\le 2^{0.2491n}.
\end{align*}




\section{Other type~I groups}\label{sec-type1}
We can extend the proof for Theorem~\ref{thm-evengroups} to some other type I groups. We streamline the proof of Theorem~\ref{thm-other-type1} in this section.
\subsection{Group $G=\bZ_2^t\oplus \bZ_4$}\label{sec-z4} 
We may again assume, by Lemma~\ref{lem-type1-group-stability}, that $B$ is a a coset of a subgroup of index $2$, as otherwise $B$ is of small size, and we can apply~\eqref{eq-mis-MM} as in~\eqref{eq-smallB} to obtain the desired bound. We split the proof into the following two cases. 

\case{1:}$B= \bZ_2^{t}\oplus\{1,3\}$ and $S\subseteq \bZ_2^{t}\oplus\{0,2\}$.  We may assume that $\Ga$ does not contain any loops. Indeed, a loop at a vertex $x\in B$ implies that $2x=s$ for some $s\in S$. But then every vertex in $B$ has a loop, as $2B=\{(0,\ldots,0,2)\}$.
For all $s\in S$ and $A\subseteq S$, define $\overline{s}:=(0,\ldots,0,2)+s$, $\overline{A}:=\cup_{s\in A} \overline{s}$, and $A^*=A\cup \overline{A}$.

\begin{lemma}\label{lem-case1-regular}
	The graph $\Ga$ is $|S^*|$-regular.
\end{lemma}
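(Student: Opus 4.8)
The plan is to compute the degree of an arbitrary vertex $x\in B$ and show that it equals $|S^*|$ independently of $x$. Write $w:=(0,\ldots,0,2)\in\bZ_2^t\oplus\bZ_4$, so that $\overline{s}=s+w$ and $S^*=S\cup\overline{S}$. Two elementary observations drive the argument. First, every element of $\bZ_2^t\oplus\{0,2\}$ is its own inverse, so $-S=S$; in particular $S\cup(-S)=S$, and since $S$ is sum-free, $0\notin S$. Second, for every $x\in B=\bZ_2^t\oplus\{1,3\}$ one has $2x=w$ (indeed $2v=0$ in $\bZ_2^t$ and $2\cdot1=2\cdot3=2$ in $\bZ_4$). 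Recall also that, as noted just before the lemma, we may assume $\Ga=L_S[B]$ has no loops; since a loop sits at $x$ exactly when $2x\in S$, i.e.\ when $w\in S$, this gives $w\notin S$.

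First I would count the type~$1$ neighbours of $x$. Each $s\in S=S\cup(-S)$ yields the type~$1$ neighbour $x+s$, which lies in $B$ (its last coordinate, one of $1+0,\,3+0,\,1+2,\,3+2$, is odd) and differs from $x$ (as $0\notin S$); distinct $s$ give distinct such neighbours. Hence the type~$1$ neighbourhood of $x$ is exactly $N_1(x)=\{x+s:s\in S\}$, of size $|S|$.

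Next I would count the type~$2$ neighbours. For each $s\in S$ the unique vertex $y$ with $x+y=s$ is $y=s-x$, which again lies in $B$. By definition the edge $x(s-x)$ is a genuine type~$2$ edge precisely when it is not a type~$1$ edge, i.e.\ when $s-x\neq x+s'$ for all $s'\in S$; using $2x=w$ this reads $s'\neq s-w=s+w=\overline{s}$, so the edge is type~$2$ iff $\overline{s}\notin S$. The resulting type~$2$ neighbours $\{s-x:s\in S,\ \overline{s}\notin S\}$ are pairwise distinct (the map $s\mapsto s-x$ is injective) and none equals $x$ (otherwise $s=2x=w\in S$), and by construction this set is disjoint from $N_1(x)$. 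Therefore
\begin{align*}
d(x,\Ga)=|S|+\bigl|\{s\in S:\overline{s}\notin S\}\bigr|,
\end{align*}
which is visibly independent of $x$.

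Finally, since $s\mapsto\overline{s}=s+w$ is an involution of $G$, it restricts to a bijection between $\{s\in S:\overline{s}\notin S\}$ and $\overline{S}\setminus S$, whence $|S^*|=|S\cup\overline{S}|=|S|+|\overline{S}\setminus S|=d(x,\Ga)$; so $\Ga$ is $|S^*|$-regular. The only real care needed is the bookkeeping in the last two steps: an edge that is simultaneously of the form $x-y\in S$ and $x+y\in S$ must be counted once (as type~$1$), and one must check throughout that the produced neighbours stay inside $B$ and never coincide with $x$. The identity $2x=w$, valid for every $x\in B$, is exactly what makes all of these checks uniform over the vertex set.
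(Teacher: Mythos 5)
Your proof is correct and rests on exactly the same idea as the paper's: the identity $2x=(0,\ldots,0,2)$ for all $x\in B$, which converts the type~$2$ edge $x(s-x)$ into the type~$1$ edge $x(x+\overline{s})$. The paper packages this globally as the statement $E(\Ga)=E_1(L_{S^*}[B])$ and then quotes Claim~\ref{cl-degree-evengroup}, whereas you carry out the equivalent local degree count $d(x)=|S|+|\overline{S}\setminus S|=|S^*|$; the substance is the same.
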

\begin{proof}
	Note first that all elements in $S^*$ have order 2, so $S^*\cup (-(S^*))=S^*$. Then Claim~\ref{cl-degree-evengroup} implies that in $L_{S^*}[B]$ all vertices are adjacent to exactly $|S^*|$ many type~$1$ edges. It suffices to show that $xy\in E(\Ga)$ if and only if $xy\in E_1(L_{S^*} [B])$. ($\Rightarrow$) If $xy\in E_1(\Ga)$, it is trivial as $S\subseteq S^*$. Otherwise, if $xy\in E_2(\Ga)$, then there exists an $s\in S$ such that $s-x=y$. This, together with $2x=s-\overline{s}$, implies that $x+\overline{s}=s-x=y$, i.e.~$xy\in E_1(L_{S^*}[B])$. ($\Leftarrow$) Let $xy\in E_1(L_{S^*}[B])$. We may assume that $x-y=\overline{s}$ for some $\overline{s}\in \overline{S}$. Then  $x+y=x-y+2y=\overline{s}+(0,\ldots,0,2)=s$, that is, $xy\in E_2(\Ga)$ as claimed. 
\end{proof}

If $|S^*|\ge 4$, then we can apply Lemma~\ref{lem-mis-cayley} to get the desired bound. Suppose that $|S^*|\le 3$. Then it must be that $S^*=\{s,\overline{s}\}$. We claim that $\Ga$ is triangle-free, which together with $e(\Ga)=\mu(G)$ and Lemma~\ref{lem-mis-almosttrifree} yields the desired bound. Indeed, a triangle $T$ in $\Ga$ must have $V(T)=\{x,x+s,x+\overline{s}\}$, for some $x\in B$. Then we have $(x+s)+\overline{s}=x+\overline{s}$, or $s=0$, a contradiction as otherwise every vertex has a loop.

\case{2:}$B= \{1\}\oplus \bZ_2^{t-1}\oplus \bZ_4$ and $S\subseteq \{0\}\oplus \bZ_2^{t-1}\oplus \bZ_4$. Define $B_0= \{1\}\oplus \bZ_2^{t-1}\oplus \{0,2\}$ and $B_1= \{1\}\oplus \bZ_2^{t-1}\oplus\{1,3\}$. We  partition $S=S_0\cup S_1$ such that $S_0\subseteq \{0\}\oplus \bZ_2^{t-1}\oplus \{0,2\}$ and $S_1\subseteq \{0\}\oplus \bZ_2^{t-1}\oplus \{1,3\}$.

We may assume that $\Gamma$ does not contain any loop. Similar to Case 1, since $2B_i=\{(0,\ldots,0,2i)\}$, if there is one loop on a vertex $x\in B_i$, then every vertex in $B_i$ would have a loop, and by~\eqref{eq-mis-MM}, we have $\mis(\Ga)\le 3^{\frac{\mu(G)/2}{3}}\le 2^{0.27 \mu(G)}$.

Note that all edges in $E(\Ga[B_0])\cup E(\Ga[B_1])$ and $E(\Ga[B_0,B_1])$ are generated by $S_0$ and $S_1$ respectively.

\begin{lemma}\label{lem-case2-regular}
	For all $i\in \{0,1\}$ and $x_i\in B_i$
	\begin{itemize}
		\item $N_{\Ga[B_0]}(x_0)=x_0+S_0$, 
		
		\item $N_{\Ga[B_1]}(x_1)=x_1+S^*_0$, 
		
		\item $N_{\Ga[B_0,B_1]}(x_i)=x_i+S^*_1$. 
	\end{itemize} 
	
	In particular, $\Ga[B_0]$, $\Ga[B_1]$, and $\Ga[B_0,B_1]$ are $|S_0|$, $|S_0^*|$, and $|S_1^*|$-regular, respectively. Furthermore, $\Ga[B_0]$ is triangle-free. 
\end{lemma}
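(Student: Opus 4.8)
The plan is to establish each of the four claims in Lemma~\ref{lem-case2-regular} by the same kind of ``doubling'' argument used in the proof of Lemma~\ref{lem-case1-regular}, treating the three induced (bipartite) subgraphs separately and keeping careful track of which generating set acts on each. The key structural observation is that in $G=\{1\}\oplus\bZ_2^{t-1}\oplus\bZ_4$ the elements of $S_0$ (those with last coordinate in $\{0,2\}$) have order $1$ or $2$, while the elements of $S_1$ (last coordinate in $\{1,3\}$) have order $4$; and for $s\in S_1$, the pair $\{s,\overline s\}$ (where $\overline s=(0,\ldots,0,2)+s$) consists of the two elements with $2x=s-\overline s=(0,\dots,0,2)$, which is exactly the ``difference'' coming from passing between $B_0$ and $B_1$. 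Since $S_0\cup(-S_0)=S_0^{*}\setminus\overline{S_0}$... more precisely, since every $s\in S_0$ has $-s=s$, we get $S_0\cup(-S_0)=S_0$ and $S_0^*\cup(-S_0^*)=S_0^*$; similarly $S_1^*\cup(-S_1^*)=S_1^*$ because $-s=\overline s$ for $s$ of order $4$ with last coordinate $1$ (as $-s$ has last coordinate $3$, and $\overline s=s+(0,\dots,0,2)$ also has last coordinate $3$, and they agree on all other coordinates since those have order $2$). These parity/order facts, together with Claim~\ref{cl-degree-evengroup} and Fact~\ref{fact-order2}, drive everything.

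First I would handle $\Ga[B_0]$: here both endpoints lie in $B_0$, so $2x_0$ has last coordinate $0$, and the edges are generated by $S_0$; a type~$2$ edge $x_0y_0$ with $x_0+y_0=s$, $s\in S_0$, would force $2x_0=s-\overline s+\overline s=\cdots$ — actually more directly, $x_0+y_0=s$ with both in $B_0$ forces the first coordinate of $s$ to be $0$ (fine) but then $s-x_0=y_0$ gives $x_0-y_0=2x_0-s$; for this to fail to be a type~$1$ difference we argue as in Claim~\ref{cl-degree-evengroup}, and the upshot combined with the absence of loops is that $\Ga[B_0]$ has only type~$1$ edges, hence $N_{\Ga[B_0]}(x_0)=x_0+S_0$ and it is $|S_0|$-regular. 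Triangle-freeness of $\Ga[B_0]$ then follows exactly as in Case~1: a triangle would be $\{x_0,x_0+s,x_0+s'\}$ with $s,s'\in S_0$, and $s,s'$ of order $\le 2$ forces one of the three pairwise sums to equal a generator only in a degenerate way, giving $s=0$ or $s=s'$, a contradiction. For $\Ga[B_1]$, the same reasoning applies but now a genuine type~$2$ edge $x_1+y_1=s\in S_0$ can occur, and the identity $x_1+\overline s = x_1 - y_1 + 2y_1$... one checks $x_1+y_1=s \iff x_1-y_1=\overline s$ using $2y_1=(0,\dots,0,2)$ since $y_1\in B_1$ has last coordinate in $\{1,3\}$ — wait, $2y_1$ has last coordinate $2$ only when the last coordinate of $y_1$ is $1$ or $3$, which is exactly the case; so $x_1-y_1=(x_1+y_1)-2y_1=s-(0,\dots,0,2)=\overline s$. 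Hence type~$2$ edges in $\Ga[B_1]$ are precisely type~$1$ edges with difference in $\overline{S_0}$, so $N_{\Ga[B_1]}(x_1)=x_1+(S_0\cup\overline{S_0})=x_1+S_0^*$ and $\Ga[B_1]$ is $|S_0^*|$-regular. Finally for the bipartite part $\Ga[B_0,B_1]$: an edge here has $x\in B_i$, $y\in B_{1-i}$, and since $x$ and $y$ differ in last coordinate between $\{0,2\}$ and $\{1,3\}$, any generator responsible (of either type) lies in $S_1$ (last coordinate in $\{1,3\}$); a type~$2$ edge $x+y=s\in S_1$ rewrites, using $x-y$ having last coordinate in $\{\pm1\}$ and $2y$ having last coordinate $2$ (again $y$'s last coordinate is one of $\{1,3\}$ or $\{0,2\}$ — need $2y$ to have last coordinate $2$, true iff $y\in B_1$; if $y\in B_0$ swap roles of $x,y$), into a type~$1$ edge with difference $s-(0,\dots,0,2)=\overline s\in\overline{S_1}$; thus $N_{\Ga[B_0,B_1]}(x_i)=x_i+(S_1\cup\overline{S_1})=x_i+S_1^*$ and the bipartite graph is $|S_1^*|$-regular.

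The main obstacle I anticipate is \emph{bookkeeping of coordinates and orders} rather than any deep idea: one must be scrupulous about which of $2x$, $2y$, $x-y$, $x+y$ has which last coordinate in $\bZ_4$, since the whole ``type~$2$ $\Leftrightarrow$ shifted type~$1$'' equivalence hinges on $2y=(0,\dots,0,2)$ holding for the right endpoint, and on $-s=\overline s$ for order-$4$ generators. A secondary subtlety is justifying that we may assume $\Ga$ is loopless: as noted in the text, a loop on any $x\in B_i$ forces $2x\in\{0\}\oplus\bZ_2^{t-1}\oplus\{0,2i\}\cap(S_i+(S\cup(-S)))$... more simply $2x=s$ for some $s$, and since $2B_i$ is a single element, every vertex of $B_i$ then carries a loop, making $\Ga$ have an isolated-ish structure to which \eqref{eq-mis-MM} applies with room to spare — I would state this reduction once and reuse it. Once Lemma~\ref{lem-case2-regular} is in hand, the downstream argument (splitting into subcases by $|S_0^*|$ and $|S_1^*|$ and applying Lemma~\ref{lem-mis-cayley} or Lemma~\ref{lem-mis-almosttrifree} as in Case~1) is routine, so I would invest the care here.
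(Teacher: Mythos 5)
Your proposal follows essentially the same route as the paper for the three neighbourhood descriptions: the paper likewise reduces to the loopless case, notes that every element of $B_0\cup S_0$ has order at most $2$ so that the three potential edges $\{x,x+s_0\}$, $\{x,x-s_0\}$, $\{x,s_0-x\}$ coincide (giving $N_{\Ga[B_0]}(x_0)=x_0+S_0$), converts type~$2$ edges inside $B_1$ into type~$1$ edges with difference in $\overline{S_0}$ via $x_1-y_1=(x_1+y_1)-2y_1$ and $2y_1=(0,\ldots,0,2)$, and uses $\overline{s_1}=-s_1$ together with $2x_0=0$ to show $\Ga[B_0,B_1]$ has no genuine type~$2$ edges. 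All of that is correct and matches the paper.

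The one genuine gap is in your triangle-freeness argument for $\Ga[B_0]$. Given a putative triangle $\{x_0,x_0+s,x_0+s'\}$ with $s,s'\in S_0$, the third edge forces $(x_0+s)+s''=x_0+s'$ for \emph{some} $s''\in S_0$ --- there is no reason for $s''$ to be $s$ or $s'$ --- and hence $s+s''=s'$. The contradiction is that this is a Schur triple inside $S_0\subseteq S$, violating the sum-freeness of $S$; it is \emph{not} that ``$s=0$ or $s=s'$''. Your stated degeneracy would only follow if the generator of the third edge were forced to lie in $\{s,s'\}$, which it is not, so the step as written does not close. The fix is one line (invoke sum-freeness of $S_0$), but as it stands this bullet of the lemma is not proved. (Relatedly, ``follows exactly as in Case~1'' is misleading: the triangle-freeness argument in Case~1 of Section~\ref{sec-z4} concerns the special situation $S^*=\{s,\overline{s}\}$ and does end in $s=0$, whereas here $S_0$ is arbitrary and the contradiction must come from sum-freeness.)
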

\begin{proof}
	Recall that edges in $\Ga[B_0]$ are generated by $S_0$. As all elements in $B_0\cup S_0$ are of order 2, for any $x\in B_0$ and $s_0\in S_0$, all three edges incident to $x$ generated by $s_0$, $\{x, x+s_0\}$, $\{x, x-s_0\}$ and $\{x, s_0-x\}$, coincide, showing that $\Ga[B_0]$ is $|S_0|$-regular. To see that $\Ga[B_0]$ is triangle-free, assume to the contrary that there exists a triangle $T\subseteq \Ga[B_0]$ with $V(T)=\{x_0,x_0+s_0,x_0+s'_0\}$, for some $s_0,s'_0\in S_0$. Then $x_0+s_0+s''_0=x_0+s'_0$ for some $s''_0\in S_0$. This implies that $s_0+s_0''=s'_0$, contradicting to $S_0$ being sum-free. The proof for $\Ga[B_1]$ being $|S_0^*|$-regular is almost identical to that of Lemma~\ref{lem-case1-regular}.

	For the bipartite graph $\Ga[B_0,B_1]$, all 
	edges are generated by $S_1$. Note that there is no type 2 edges, since elements in $B_0$ have order 2 and so $\{x,s_1-x\}$ coincides with $\{x, s_1+x\}$ for any $x\in B_0$ and $s_1\in S_1$. Thus, all edges are of the form $x\pm s_1$, showing that $\Ga[B_0,B_1]$ is $|S_1^*|$-regular as $S_1^*=S_1\cup (-S_1)$ due to $\overline{s_1}=-s_1$. 
\end{proof}

An immediate consequence is that the link graph is relatively regular: $|S|\le \de(\Ga)\le \De(\Ga)\le 2\de(\Ga)$. We may then assume that $|S|\le 20000$, as otherwise it can be handled as in Claim~\ref{cl-largeS}.

Suppose that $S_1=\emptyset$. Then $\Ga$ is a disjoint union of $\Ga[B_i]$, $i\in\{0,1\}$. By~\eqref{htnew} and Lemma~\ref{lem-case2-regular}, $\mis(\Ga[B_0])\le 2^{\mu(G)/4}$. It suffices to show that $\mis(\Ga[B_1])$ is exponentially smaller than $2^{\mu(G)/4}$. Recall that $\Ga[B_1]$ is $|S_0^*|$-regular, then similar analysis as in Case~1 implies the desired bound.

We may now assume that $|S_1|\ge 1$. Furthermore, $|S_0|\ge 1$, as otherwise $\Ga=\Ga[B_0,B_1]$ is a $D$-regular bipartite graph with $D\ge 2$ and Lemma~\ref{lem-mis-almosttrifree} implies the desired bound. 

Define $d_0:=|S_0|+|S^*_1|$ and $d_1:=|S^*_0|+|S^*_1|$. By Lemma~\ref{lem-case2-regular}, all vertices in $B_0$ and $B_1$ have degree $d_0$ and $d_1$, respectively. Note that $d_0\le d_1\le 2d_0$. Thus,
$e(\Ga)=\frac{\mu(G)}{2}\left(\frac{d_0}{2}+\frac{d_1}{2}\right)$.
Hence, Lemma~\ref{lem-mis-cayley}, together with  $\De(\Ga)=d_1$, implies
\begin{align*}
\mis(\Ga)\le 3^{\frac{d_1}{13}}\cdot 3^{\frac{\mu(G)}{3}-\frac{\mu(G)}{4}\cdot\frac{d_0+d_1-4}{13d_1}},
\end{align*}
which, by a short calculation, is exponentially smaller than $2^{\mu(G)/2}$ when $d_0\ge 4$. We can then assume
$$d_0=|S_0|+|S_1\cup (-S_1)|\le 3.$$
As $S_0$ and $S_1$ are non-empty and elements in $S_1$ have order $4$, we must have $S_0=\{s_0\}$ and $S_1=\{s_1\}$, in which case $\Ga[B_0],\Ga[B_1],\Ga[B_0,B_1]$ are $1$-, $2$- and $2$-regular respectively. We claim that $\Ga$ is triangle-free. Then Lemma~\ref{lem-mis-almosttrifree}, together with $e(\Ga)=7\mu(G)/4$, implies the desired bound.

Suppose to the contrary that there exists a triangle $T$. As $\Ga[B_0]$ is triangle-free, $V(T)\cap B_1\neq \emptyset$. If $V(T)\subseteq B_1$, then $V(T)=\{x,x+s_0,x+\overline{s_0}\}$ and $N_{\Ga[B_1]}(x+s_0)=\{x,x+s_0+\overline{s_0}\}$, implying that $s_0=0$, a contradiction. If $V(T)$ intersects $B_1$ at two vertices, then we must have $V(T)=\{x_0, x_0+s_1,x_0-s_1\}$ for some $x_0\in B_0$. This, however, implies that either $(x_0+s_1)+s_0=x_0-s_1$ or $(x_0+s_1)+\overline{s_0}=x_0-s_1$. The former case implies that $2s_1=s_0$; while the latter case yields $s_0=0$, leading to contradictions in both cases. The case when $V(T)$ intersects $B_0$ at two vertices can be handled similarly.







\subsection{$G=\bZ_5\oplus H$ and $2\nmid |H|$}\label{sec-z5}

In this section, we prove that $\mis(\Ga)$ is exponentially smaller than $2^{\mu(G)/2}=2^{n/5}$.  In particular, we will show that there exists a positive constant $c$,    
\begin{align}\label{eq-thmz5}
\mis(\Ga)\le 2^{(1/2-c)\mu(G)}.
\end{align}

If $B$ is smaller than $0.37n$, then~\eqref{eq-mis-MM} suffices. Note that for type I(5) groups, the stability Lemma~\ref{lem-type1-group-stability} applies only to sets of size at least $7n/18\approx 0.389n$, nonetheless with the same proof in~\cite{GR-g}, the stability can be slightly strenghten to cover sets of size at least $11n/30\approx 0.367n$. We may then assume that $B=\{2,3\}\oplus H$ and $S\subseteq \{0,1,4\}\oplus H$. For all subsets $G'\subseteq G$, denote $G'_i$ to be the set $G'\cap \{i\}\oplus H$, for all $i\in \bZ_5$.

Similar to Lemma~\ref{lem-case2-regular}, the following claim on neighbourhoods of vertices in $\Ga$ can be derived. We omit its proof.

\begin{claim}\label{cor-z5-edges}
	For all $i\in\{2,3\}$ and  $x_i\in B_i$,
	\begin{itemize}
		\item $d_1(x_i,\Ga[B_i])=|S_0\cup(-S_0)|$;
		
		\item $d_2(x_i,\Ga[B_i])=|S_{2i}|-|\{s\in S_{2i}: 2x_i\in s+(S_0\cup (-S_0))\}|$;
		
		\item $d_1(x_i,\Ga[B_2,B_3])=|S_4\cup(-S_1)|$;
		
		\item $d_2(x_i,\Ga[B_2,B_3])=|S_0|-|\{s\in S_0: 2x_i\in s+(S_{2i}\cup (-S_{-2i}))\}|$.
	\end{itemize}
\end{claim}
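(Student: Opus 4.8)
The plan is to mimic the structure of Lemma~\ref{lem-case2-regular} for the group $\bZ_5\oplus H$, carefully tracking how each generator subset $S_i$ acts on the two cosets $B_2$ and $B_3$. First I would recall that $B=\{2,3\}\oplus H$ and $S\subseteq\{0,1,4\}\oplus H$, so $S=S_0\cup S_1\cup S_4$ where $S_i=S\cap(\{i\}\oplus H)$. The key point is to determine, for each pair $(i,j)$ with $i,j\in\{2,3\}$, which part of $S$ can generate an edge inside $B_i$ (i.e.\ a Schur triple with the third element in $S$ and both endpoints in the coset $\{i\}\oplus H$) versus an edge between $B_2$ and $B_3$. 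A type~1 edge $x-y=s$ with $x,y$ in the same coset forces $s\in\{0\}\oplus H$, hence $s\in S_0\cup(-S_0)$; a type~1 edge between the two cosets forces $x-y\in\{\pm 1\}\oplus H$, hence $s\in S_4\cup(-S_1)$ since $4\equiv-1\pmod 5$. For type~2 edges $x+y=s$ inside $B_i$ we need $2i\equiv$ (first coordinate of $s$), i.e.\ $s\in S_{2i}$ (with $2\cdot 2=4$ and $2\cdot 3=6\equiv 1$), while between the cosets $2+3=5\equiv 0$ forces $s\in S_0$.

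Given this bookkeeping, the four statements of Claim~\ref{cor-z5-edges} follow just as in Claim~\ref{cl-degree-evengroup}: the type~1 degree inside $B_i$ is exactly $|S_0\cup(-S_0)|$ because each $s\in S_0\cup(-S_0)$ produces a unique neighbour $x+s$ still lying in $\{i\}\oplus H$ (here I use that $H$ has odd order, so $\bZ_5\oplus H$ decomposes with the relevant structure and $-S_1,-S_4$ behave predictably). The type~1 degree across is $|S_4\cup(-S_1)|$ by the same reasoning, noting $\overline{s}$-type identifications do not collapse things here because $5$ is odd. For the type~2 degrees, every $s\in S_{2i}$ gives a candidate edge $\{x,s-x\}$ inside $B_i$, and exactly as in Claim~\ref{cl-degree-evengroup}(ii) that candidate fails to be a genuine type~2 edge precisely when it already appears as a type~1 edge, i.e.\ when $2x\in s+(S_0\cup(-S_0))$; subtracting off those gives the stated formula. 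The cross type~2 degree formula is identical with the roles of the index sets swapped: a candidate edge $\{x_i,s-x_i\}$ with $s\in S_0$ joining $B_2$ and $B_3$ is a true type~2 edge unless it coincides with a cross type~1 edge, which happens exactly when $2x_i\in s+(S_{2i}\cup(-S_{-2i}))$.

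Since the statement says ``We omit its proof,'' I would present the reasoning above compactly as the justification for why Claim~\ref{cor-z5-edges} holds, emphasising that it is a direct coordinate-wise analogue of Claims~\ref{cl-degree-evengroup} and Lemma~\ref{lem-case2-regular}: split $G$ by the $\bZ_5$-coordinate, observe which $S_i$ can contribute each edge type between which cosets, and then count type~1 neighbours exactly (one per element of the relevant $\pm$-symmetrised generator set) and type~2 neighbours up to the correction coming from coincidences with type~1 edges. The only place requiring a little care is ensuring that $-S_1$ and $-S_4$ land in the expected cosets ($-1\equiv 4$, $-4\equiv 1$ in $\bZ_5$) and that the odd order of $H$ prevents any unexpected $2$-torsion collapsing distinct generators into the same edge — this is the analogue of how $\overline{s}=-s$ was used in Lemma~\ref{lem-case2-regular}, and it is what makes the type~1 counts exact rather than merely upper bounds. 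I do not expect a genuine obstacle here; the main risk is a purely notational slip in the modular arithmetic of the indices, which the explicit table $2\cdot2=4$, $2\cdot3=1$, $2+3=0$, $-1=4$, $-4=1$ in $\bZ_5$ resolves.
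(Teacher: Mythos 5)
Your proposal is correct and follows exactly the route the paper intends: the paper omits the proof of Claim~\ref{cor-z5-edges} with the remark that it is derived as in Lemma~\ref{lem-case2-regular}, and your coordinate-wise bookkeeping (sorting generators by their $\bZ_5$-coordinate, counting type~1 neighbours exactly and correcting the type~2 count for coincidences with type~1 edges as in Claim~\ref{cl-degree-evengroup}(ii)) is precisely that derivation. The modular arithmetic $2\cdot 2=4$, $2\cdot 3=1$, $2+3=0$, $-1=4$, $-4=1$ and the identifications $(-S)\cap(\{j\}\oplus H)=-S_{-j}$ all check out.
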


An immediate consequence is that the link graph is relatively regular: $|S|/2\le \de(\Ga)\le \De(\Ga)\le 2\de(\Ga)$. We may again assume that $|S|=O(1)$, as otherwise it can be handled as in Claim~\ref{cl-largeS}. As now $\De(\Ga)=O(1)$, we can make use of the following corollary of Lemma~\ref{lem-mis-cayley}.

\begin{claim}\label{cl-z5-alde}
	If $e(\Ga)\ge (1+\alpha)|B|-O_S(1)$, and $\alpha/\De(\Ga)\ge 1/4$, then $\Ga$ satisfies~\eqref{eq-thmz5}.
\end{claim}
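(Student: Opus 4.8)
The plan is to derive Claim~\ref{cl-z5-alde} as a direct corollary of Lemma~\ref{lem-mis-cayley} by substituting the hypotheses into the conclusion and checking that the resulting exponent beats $n/5 = \mu(G)/2$ by a fixed positive constant. First I would record the basic parameters: writing $\Ga := L_S[B]$, we have $v(\Ga) = |B| = 2n/5 = \mu(G)$ (after the usual harmless step of padding $B$ to a full coset of the index-$2$, er, index-$5$ subgroup so that $|B| = 2n/5$), and by hypothesis $e(\Ga) \ge (1+\alpha)|B| - O_S(1)$. Thus with the notation of Lemma~\ref{lem-mis-cayley} we take the number of edges to be $|B| + k$ where $k = \alpha|B| - O_S(1)$, and $\Delta = \De(\Ga) = O_S(1)$. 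Since $|S| = O(1)$ we have $\Delta$ bounded by an absolute constant, so we may pick the constant $C := 3^{\Delta/13}$ (which is legitimate since the lemma only requires $C \ge 3^{\Delta/13}$), and this $C$ is itself $O_S(1)$, hence absorbed into a $2^{o(n)}$ or even $O_S(1)$ factor.

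Applying Lemma~\ref{lem-mis-cayley} then gives
\begin{align*}
\mis(\Ga) \le C \cdot 3^{\frac{|B|}{3} - \frac{k}{13\Delta}} = O_S(1)\cdot 3^{\frac{|B|}{3} - \frac{\alpha|B|}{13\Delta} + O_S(1)}.
\end{align*}
Now I would use the hypothesis $\alpha/\Delta \ge 1/4$ to bound the negative term: $\frac{\alpha|B|}{13\Delta} \ge \frac{|B|}{52}$, so the exponent is at most $\left(\frac{1}{3} - \frac{1}{52}\right)|B| + O_S(1)$. Since $|B| = \mu(G) = 2n/5$, the base-$3$ exponent is $\left(\frac{1}{3} - \frac{1}{52}\right)\mu(G) = \frac{49}{156}\mu(G)$, and converting to base $2$ this is $\frac{49}{156}\log_2 3 \cdot \mu(G) \approx 0.4979\,\mu(G) < \mu(G)/2$. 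More precisely, $\log_2 3 < 1.585$, so $\frac{49}{156}\log_2 3 < 0.498 = 1/2 - 0.002$, which gives the bound \eqref{eq-thmz5} with, say, $c = 0.001$ (comfortably, after absorbing the $O_S(1)$ error into the exponent for $n$ large).

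The only genuine point requiring a line of care — and the mild ``obstacle'' — is making sure Lemma~\ref{lem-mis-cayley} is applicable, i.e.\ that $k > 0$ and that the constant $C = 3^{\Delta/13}$ and the error $O_S(1)$ do not swamp the gain; both are fine because $\Delta, |S|$ are bounded by absolute constants independent of $n$ (this is exactly why we reduced to $|S| = O(1)$ beforehand), while the gain $\frac{1}{52}\mu(G) = \Theta(n)$ is linear in $n$. I would also note in passing that padding $B$ to a coset only increases $\mis(\Ga)$ (adding vertices never decreases the count of maximal independent sets, as already observed in the even-order case), and that $e(\Ga) \ge (1+\alpha)|B| - O_S(1)$ with $\alpha > 0$ in particular forces $\Delta \ge 3$, so the lemma's setup $\De \ge 3$ is met; both are routine. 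Hence the claim follows.
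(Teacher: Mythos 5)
Your proof is correct and follows exactly the route the paper intends: the paper states Claim~\ref{cl-z5-alde} as an immediate corollary of Lemma~\ref{lem-mis-cayley} without further argument, and your substitution $k=\alpha|B|-O_S(1)$, $\Delta=O_S(1)$, $|B|=\mu(G)=2n/5$, together with the numerical check $\frac{49}{156}\log_2 3<\frac12$, is precisely the verification being left to the reader. The side remarks (choosing $C=3^{\Delta/13}$, checking $k>0$, absorbing the $O_S(1)$ terms) are the right points of care and all hold.
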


For the rest of the proof, without loss of generality, assume that $|S_1|\ge |S_4|$. Next, we will calculate the ratio $\alpha/\De(\Ga)$ depending on size of $S$. By Claim~\ref{cor-z5-edges}, 
\begin{align*}
e(\Ga)= \frac{|B|}{4}\cdot\left(2|S_0\cup (-S_0)|+|S_4|+|S_1|+2|S_4\cup (-S_1)|+2|S_0|\right)-O_S(1),
\end{align*}
and
\begin{align*}
\De(\Ga)=|S_0\cup (-S_0)|+|S_1|+|S_0|+|S_4\cup (-S_1)|.
\end{align*}
Therefore,
\begin{align*}
\frac{\alpha}{\De(\Ga)}
=\frac{1}{4}\cdot\frac{2|S_0\cup (-S_0)|+|S_4|+|S_1|+2|S_4\cup (-S_1)|+2|S_0|-4}{|S_0\cup (-S_0)|+|S_1|+|S_0|+|S_4\cup (-S_1)|}.
\end{align*}
By Claim~\ref{cl-z5-alde}, we may assume $\alpha/\De(\Ga)<1/4$, implying that 
\begin{align}\label{eq-z5-suff}
|S_0\cup (-S_0)|+|S_0|+|S_4\cup (-S_1)|+|S_4|\le 3.
\end{align}
In particular, we must have $|S_0|\le 1$. 

Suppose that $|S_0|=1$. As $H$ has no order-2 element, $|S_0\cup (-S_0)|=2$ and hence $S_1, S_4=\emptyset$. By Claim~\ref{cor-z5-edges},  $\Ga[B_2,B_3]$ is a matching and 
apart from $O_S(1)$ vertices, $\Ga[B_i]$, $i\in\{2,3\}$, is a disjoint union of $\ell$-cycle, where $\ell$ is the order of $s\in S_0$. If $\ell\neq 3$, then $\Ga$ is triangle-free. Note that $\De(\Ga)= 3$ and $e(\Ga)=3\mu(\Ga)/2-O_S(1)$, then Lemma~\ref{lem-mis-almosttrifree} finishes the proof of this case. If $\ell=3$, then apart from constantly many vertices, $\Ga$ is a disjoint union of the six-vertex graph obtained by adding a perfect matching between two triangles. Thus $\mis(\Ga)\le 6^{(1/6+o(1))\mu(G)}\le 2^{0.45\mu(G)}$.

We may then assume that $S_0=\emptyset$. Note that $S_1\neq\emptyset$, as otherwise $S=\emptyset$. Thus $e(\Ga)\ge 3|S_1||B|/4\ge 3|B|/4$. We shall see that in this case $\Ga$ can be made triangle-free by removing constantly many vertices. Then as $e(\Ga)\ge 3|B|/4$, Lemma~\ref{lem-mis-almosttrifree} finishes the proof. Recall that $H$ contains no element of order 2, thus, it suffices to show every triangle contains a vertex $y$ with $2y\in S+S-S$ as $|S|=O(1)$. Let $T$ be a triangle induced by $\{x,y,z\}$. Assume that $x\in B_2$ and $y,z\in B_3$, other cases are similar. Recall that edges in $[B_2,B_3]$ and $B_3$ are type 1 and type 2 respectively. Then $x+s'=y$, $x+s''=z$ and $y+z=s$ for some $s,s',s''\in S_1$, implying that $2y=s+s'-s''$ as desired.


\section{Type II groups}\label{sec-type2}

\begin{proof}[Proof of Proposition~\ref{prop-9}]
	Upper bound follows from~\eqref{eq-g}. For the lower bound, as $9||G|$, either $G=\bZ_{3^a}\oplus G'$ with $a\ge 2$, or $G=\bZ_3\oplus \bZ_3\oplus G'$. In the former case, let $H< \bZ_{3^a}$  be a subgroup of index three. Then, $B:=(1+H) \oplus G'$ is a sum-free subset of size $\mu(G)$. Since $a\ge 2$, we have that $3||H|$. Let $x\in H$ be of order three in $H$, and define $s:=(x,0_{G'})$. Note that $s$ has order three in $G$. Note that the graph $L_{\{s\}}[B]$ does not have any type~$2$ edges. Indeed, for all $(1+y, z)\in B$, with $y\in H$ and $z\in G'$, the element $s-(1+y, z)\in (2+H) \oplus G'$. Therefore, every vertex in $L_{\{s\}}[B]$ has degree exactly two. Since $s$ has order three, it is easy to check that $L_{\{s\}}[B]$ is a disjoint union of triangles $T$ with
	\begin{align*}
	V(T)=\{(1+y,z),(1+y+x,z),(1+y+2x,z)\}, \text{ for }y\in H\text{ and }z\in G'.
	\end{align*}
	
	Suppose now $G=\bZ_{3}\oplus \bZ_{3}\oplus G'$. Then, the set $B:=\{1\} \oplus \bZ_{3}\oplus G'$ is a sum-free subset of size $\mu(G)$. Let $s:=(0,1,0_{G'})$, and similar to the previous case, there are no type~$2$ edges in $L_{\{s\}}[B]$, and also $L_{\{s\}}[B]$ is a disjoint union of triangles $T$ with 
	\begin{align*}
	V(T)=\{(1,y,z),(1,y+1,z),(1,y+2,z)\}, \text{ for some }y\in \bZ_{3}\text{ and }z\in G'.
	\end{align*}
	
	Thus, in either case, the link graph is a disjoint union of triangles. Note that every maximal independent set $I$ in $L_{\{s\}}[B]$ corresponds naturally to a maximal sum-free set containing $I\cup \{s\}$ in $G$, and thus, $\fm(G)\ge\mis(L_{\{s\}}[B])=3^{|B|/3}$, as desired.
\end{proof}

\section{Concluding remarks}\label{sec-rmk}
In this paper, we show that type II groups of order divisible by 9 have many maximal sum-free sets,~$3^{(1/3+o(1))\mu(G)}$; while almost all even order group, i.e.~type I(2), have exponentially fewer than $2^{\mu(G)/2}$. This is in sharp contrast to the integers setting. Many interesting problems remain. For example, very little is known about type III groups. We conclude this paper with two further remarks.

\medskip

\begin{itemize}
	\item We establish the bound $\fm(G)\le 2^{(1/2-c)\mu(G)}$ for even order groups with sublinear number of order 2 elements. New ideas are needed to handle the remaining constant many even order groups with $\Omega(n)$ number of order 2 elements. We see in Section~\ref{sec-z4} that the same bound holds for the group $\bZ_2\oplus \cdots \oplus \bZ_{2}\oplus \bZ_{4}$, which is in a sense the `worst' even order group as it has the most number of order 2 elements (other than $\bZ_2^k$). Considering also the result on type I(5) groups $\bZ_5\oplus H$, it is plausible that $\bZ_2^k$ is the group with the most number of maximal sum-free sets among type I groups.
	\begin{conjecture}
		All type I groups $G$ except $\bZ_2^k$ have exponentially fewer maximal sum-free sets than $2^{\mu(G)/2}$.
	\end{conjecture}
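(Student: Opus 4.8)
The plan is to run, for every type~I group, the same container-removal-stability reduction used for Theorems~\ref{thm-evengroups} and~\ref{thm-other-type1}. Fix a type~I($p$) group $G$, write $p=3k+2$, so $\mu(G)=\left(\tfrac13+\tfrac1{3p}\right)n$. By Lemma~\ref{lem-container}, Green's group removal lemma and Lemma~\ref{lem-mis}, $\fm(G)\le 2^{o(n)}\max_{S}\mis(L_S[B])$ over sum-free $S$ lying in the small set $C$, where the container $F=B\cup C$ has $B$ sum-free. If $|B|$ is bounded away from $\mu(G)$ then $\mis(L_S[B])\le 3^{|B|/3}$ already beats $2^{(1/2-c)\mu(G)}$; otherwise Lemma~\ref{lem-type1-group-stability} lets us take $B=\phi^{-1}(\{k+1,\dots,2k+1\})$ for a surjection $\phi\colon G\to\bZ_p$ and $S$ supported on the generator levels $\pm\{0,1,\dots,k\}\bmod p$. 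Finally, Lemma~\ref{lem-mis-regular-dense} applied to the almost-regular graph $L_S[B]$ handles $|S|$ large, so we may assume $|S|=O(1)$.

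For the primes $p\ge 5$ the rest should be a (more laborious) replay of Sections~\ref{sec-almostallevens} and~\ref{sec-z5}, the point being that now $2^r=1$. Via the analogues for the homomorphism onto $\bZ_p$ of Claims~\ref{cl-degree-evengroup}--\ref{cl-edges-fewevens}, the type~1 degree of $L_S[B]$ is exactly $|S\cup(-S)|$ at every vertex and the type~2 degree is $|S|$ off $O_S(1)$ vertices, so $e(L_S[B])\ge\tfrac12(|S\cup(-S)|+|S|)|B|-O_S(1)$. When $|S|\ge 2$, so that $|S\cup(-S)|+|S|\ge 4$, this feeds Lemma~\ref{lem-mis-cayley} with $k':=e(L_S[B])-|B|$ and $\Delta:=\Delta(L_S[B])$ to give $\mis(L_S[B])\le C\cdot 3^{|B|/3-k'/(13\Delta)}\le 2^{(1/2-c)\mu(G)}$ for a fixed $c>0$; when $|S|=1$ the order $\ell$ of the single generator controls everything --- if $\ell=3$ the link graph is $|B|/3$ disjoint triangles whose type~2 edges pair off all but $O_S(1)$ of them into two-triangles-plus-a-matching gadgets (each with $\mis=6$) and place a loop on the rest, so $\mis(L_S[B])\le 6^{|B|/6}<2^{0.44\mu(G)}$, while if $\ell\ne 3$ the link graph is a disjoint union of short even cycles, single edges and loop-carrying pieces, each contributing at most $2^{(\text{size})/2}$. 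The only genuinely new input is a description of the Schur-triple structure of the coset $\{k+1,\dots,2k+1\}\subseteq\bZ_p$; the key fact is that $\{k+1,\dots,2k+1\}+\{k+1,\dots,2k+1\}$ is disjoint from $\{k+1,\dots,2k+1\}$ in $\bZ_p$, which should confine triangles of $L_S[B]$ to order-$3$ generators exactly as for $p=5$.

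The main obstacle is type~I($2$) with $2^r=\Omega(n)$, which by Proposition~\ref{prop-almostall} means $G=\bZ_2^k\oplus H'$ with $|H'|=O(1)$, $H'\ne 0$ (and $H'\ne\bZ_4$, already done in Section~\ref{sec-z4}). Here the ``$O_S(1)$ exceptional vertices'' counts above degenerate to $\Omega(n)$ --- since $2x=g$ can have $\Omega(n)$ solutions in the coset $B$ --- so one simultaneously loses the density estimate feeding Lemma~\ref{lem-mis-cayley} and the vertex-deletion reduction feeding Lemma~\ref{lem-mis-almosttrifree}, and a direct structural analysis of $L_S[B]$ is needed for \emph{all} $|S|=O(1)$, not just $|S|=1$. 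The guiding heuristic is that $L_S[B]$ collapses to a perfect matching (and, for $|S|\ge 2$, to a disjoint union of hypercubes $Q_{|S|}$), which are exactly the configurations attaining $\mis=2^{|B|/2}$, only when $2x=0$ for every $x\in B$ and $2x\notin S$ for every $x$; since $B$ is a coset of an index-two subgroup this forces $G=\bZ_2^k$, and as soon as $H'\ne 0$ a positive proportion of $x\in B$ have $2x\ne 0$, so the type~2 edges $x\sim s-x$ diverge from the type~1 edges $x\sim s+x$ and stitch the matching edges (respectively, the hypercube copies) into $4$-cycles, hexagons or loop-bearing components, each forcing a strict saving. Turning this into a proof uniform over all bounded $H'$ --- generalising the $\bZ_4$ analysis, which already needed two cases and several subcases --- is where the real work, and any genuinely new idea, will be required.
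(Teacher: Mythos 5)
The statement you are asked to prove is stated in the paper only as an open conjecture in the concluding remarks; the paper contains no proof of it, and in fact the authors explicitly identify two obstructions to proving it by their methods. Your proposal is an honest research programme rather than a proof: it reproduces the paper's container--removal--link-graph reduction and then, for the genuinely hard cases, says that ``the real work, and any genuinely new idea, will be required.'' That is a correct self-assessment, but it means the proposal does not establish the conjecture. The first obstruction, which you do name, is the even order groups $G=\bZ_2^k\oplus H'$ with $2^r=\Omega(n)$: here Fact~\ref{fact-order2} gives $\Omega(n)$ solutions to $2x=g$, so the $O_S(1)$ error terms in Claims~\ref{cl-edges-fewevens} and~\ref{cl-allevens-1gen-fewtri} become $\Omega(n)$ and both Lemma~\ref{lem-mis-cayley} and Lemma~\ref{lem-mis-almosttrifree} lose their savings. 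Your heuristic that the link graph collapses to a matching or to hypercubes only when $G=\bZ_2^k$ is plausible, but no argument is given that the resulting ``strict saving'' is uniform in $H'$ and in $S$, which is exactly where the paper's $\bZ_2^t\oplus\bZ_4$ analysis already required a delicate case split.

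The second gap is in your treatment of $p\ge 5$, and it is precisely the difficulty the authors flag: the stability Lemma~\ref{lem-type1-group-stability} only applies to sum-free sets of size exceeding $\bigl(\tfrac13+\tfrac1{3(p+1)}\bigr)n=\bigl(1-\tfrac1{(p+1)^2}\bigr)\mu(G)$, whereas the Moon--Moser bound $3^{|B|/3}$ only beats $2^{(1/2-c)\mu(G)}$ when $|B|\lesssim\tfrac{3}{2\log_2 3}\mu(G)\approx 0.946\,\mu(G)$. For every $p\ge 5$ these thresholds leave a nonempty window of sizes $|B|$ where neither tool applies, and the window widens with $p$; the paper closes it for $p=5$ only by re-proving a strengthened stability statement ($11n/30$ in place of $7n/18$), with no claim that this can be done for general $p$. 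Consequently one cannot in general ``take $B=\phi^{-1}(\{k+1,\dots,2k+1\})$.'' Moreover, even when $B$ is the full preimage, your asserted regularity ($d_1(x)=|S\cup(-S)|$ at every vertex, $e(L_S[B])\ge\tfrac12(|S\cup(-S)|+|S|)|B|-O_S(1)$) fails once $B$ occupies the $k+1\ge 2$ levels $\{k+1,\dots,2k+1\}$ of $\bZ_p$: for $x$ at level $i$ and $s$ at level $j$ the putative neighbour $x+s$ lies at level $i+j$, which can exit the middle interval, so the type~1 degree genuinely depends on the level of $x$ and the graph is not almost regular in the sense needed to feed Lemma~\ref{lem-mis-cayley}. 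A correct argument for large $p$ would have to account for this level-dependent degree profile (as the paper's Claim~\ref{cor-z5-edges} does, laboriously, already for $p=5$), and no such accounting is sketched.
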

	Apart from the even order groups with many order 2 elements, another difficulty for the above conjecture is that the stability result gets weaker for type I($p$) groups when $p$ gets larger. As a result, we might not be able to assume the ground set of the link graph is a union of cosets, which is very useful in our analysis.
	
   \medskip
	
	\item The remaining type II groups not covered by Proposition~\ref{prop-9} are of the form $G=\bZ_3\oplus_i\bZ_{p_i^{a_i}}$ with $p_i\equiv 1$ (mod $3$). It is known~\cite{BLST2} that $$2^{\mu(G)/2}\le \fm(G)\le 3^{(1/3+o(1))\mu(G)}.$$
	It would be interesting to know which bound is closer to the truth.
\end{itemize}


\bigskip

{\footnotesize \obeylines \parindent=0pt
Mathematics Institute, University of Warwick, UK.
}

\begin{flushleft}
	{\it{E-mail addresses}:
		\tt{ $\lbrace$h.liu.9,~m.sharifzadeh$\rbrace$@warwick.ac.uk}}
\end{flushleft}

\end{document}